\newtheorem{thm}{Theorem}[section]
\newtheorem{cor}[thm]{Corollary}
\newtheorem{lem}[thm]{Lemma}
\newtheorem{prop}[thm]{Proposition}
\newtheorem{rmk}[thm]{Remark}
\newcommand{\set}[1]{\left\{#1\right\}}
\newcommand{\LT}{\mathcal{L}}
\newenvironment{proof}{\paragraph{Proof:}}{\hfill$\square$}
\title{Explicit Asymptotics on First Passage Times of Diffusion Processes}
\author{Angelos Dassios\thanks{Department of Statistics, London School of Economics, Houghton Street, London WC2A 2AE, UK, a.dassios@lse.ac.uk} }
\author{Luting Li\thanks{Department of Statistics, London School of Economics, Houghton Street, London WC2A 2AE, UK, l.li27@lse.ac.uk}}
\affil{Department of Statistics, London School of Economics}
\begin{document}
\date{\today}
\maketitle

\begin{abstract}
We introduce a unified framework for solving first passage times of time-homogeneous diffusion processes. According to the killed version potential theory and the perturbation theory, we are able to deduce closed-form solutions for probability densities of single-sided level crossing problem. The framework is applicable to diffusion processes with continuous drift functions, and a recursive system in the frequency domain has been provided. Besides, we derive a probabilistic representation for error estimation. The representation can be used to evaluate deviations in perturbed density functions. In the present paper, we apply the framework to Ornstein-Uhlenbeck and Bessel processes to find closed-form approximations for their first passage times; another successful application is given by the exponential-Shiryaev process \cite{dassios2018economic}. Numerical results are provided at the end of this paper.

	\smallskip

\noindent {\bf Keywords}: First Passage Time; Diffusion Process; Potential Theory; Perturbation Theory; Ornstein-Uhlenbeck Process; Bessel Process; Parabolic Cylinder Function; Confluent Hypergeometric Function; Exponential Integral.
\smallskip

\noindent {\bf 2010 Mathematics Subject Classification}: 91G60; 60G40; 62E17; 91G80.

\end{abstract}

\section{Introduction}
The interest of understanding the \emph{first passage time} (FPT) could be traced back to the early 20th century \cite{bachelier1900theorie, schrodinger1915theorie}. Known also as the \emph{first hitting time}, the FPT defines a random time that a stochastic process would visit a predefined state. The phenomenon of uncertainty in time is often observed from natural or social science. Therefore, within a century the FPT has been actively studied in economics, physics, biology, etc. \cite{redner2001guide, ricciardi1999outline, novikov2003time, dassios2018recursive}.

Depending on various types of underlying processes and hitting boundaries, the FPT itself consists of a large cluster of different research. We refer to \cite{siegert1951first, arbib1965hitting, blake1973level, novikov1981martingale} for a non-conclusive review. Among those research, especially in the area of mathematical finance and insurance, single-sided constant-barrier crossing problem is one of the most commonly studied, e.g. \cite{baldi1999pricing, dassios2016joint}. A general approach for solving such problem starts with finding the \emph{Laplace transform} (LT) of the FPT density (FPTD). The LT usually comes from a unique solution to a second order non-homogeneous ODE with Dirichlet-type boundary values \cite{doob2012classical, ikeda2014stochastic}. For many familiar diffusion processes, the LTs have been solved and are listed in \cite{borodin2012handbook}. However, those LTs usually are expressed in terms of special functions and only a few of them have explicit inverse transforms. Therefore, many efforts have been made on the numerical inverse side. We refer to \cite{abate2006unified} for more detail. Alternatively, using spectral theorem on linear operators \cite{ito2012diffusion, kent1982spectral, kent1980eigenvalue} one can simplify the original LT. Under certain circumstances, closed-form FPTDs could be acquired through series representations \cite{linetsky2004computing, alili2005representations}. But people may find the spectral decomposition approach has convergence issue for small $t$. In the present paper, our object is to apply perturbation theory and solve explicit FPTDs for general single-side level crossing problem.

Consider a filtered probability space generated by Brownian motion $\left(\Omega,\mathcal{F},\left\{\mathcal{F}_t\right\}_{t\ge 0},\mathbb{P}\right)$. Let $\mathcal{D}$ be an open interval on $\mathbb{R}$ and $h(\cdot)$ be a continuous function defined on $\mathcal{D}$. Our underlying process is from a class of SDEs. We require these SDEs have at least weak solutions and are strong Markov:
\begin{equation}\label{eqn:basic}
X_t=\epsilon h(X_t)dt + dW_t,\ X_0=x \in \mathcal{D}.
\end{equation}
Under our settings, $\epsilon$ is a real parameter and it should properly define $\set{X_t}_{t\ge 0}$ on the domain. For the convenience of deduction, we set the volatility to be constant. If a time-homogeneous diffusion coefficient $\sigma(x)$ is given, one may refer to \cite[Theorem 1.6]{revuz2013continuous} to retrieve an SDE in \eqref{eqn:basic} by using time-changed Brownian motion. Also, consider a hitting level $a\in \mathbb{R}$, we specify two types of boundaries on $\mathcal{D}$:
\[
\partial\mathcal{D}^u_a:=\left\{a,+\infty\right\},\ \partial\mathcal{D}^l_a:=\left\{-\infty, a\right\},
\] 
namely boundaries for upper- and lower-regions. For shorthand, we use $\partial\mathcal{D}_a$ to represent single sided boundaries without labelling direction. By suppressing $x$ and $a$, we define the FPT of $\set{X_t}_{t\ge 0}$ from $x$ to $a$ through
\[
\tau:=\inf\left\{t> 0:X_t\in \partial\mathcal{D}_a\right\}.
\]
Note that the Brownian filtration $\set{\mathcal{F}_t}_{t\ge 0}$ is continuous on both sides. Therefore according to \cite{peskir2006optimal}, $\tau$ is well defined (\emph{regular}). In addition, for $x\in \mathcal{D}$ it is guaranteed that\footnote{The notation $\mathbb{P}_x(\cdot):=\mathbb{P}(\cdot|\mathcal{F}_0)=\mathbb{P}(\cdot|X_0=x)$ follows from the Markov property.} $\mathbb{P}_x\left(\tau>0\right)=1$.

For those FPTs which are \emph{almost surely} (\emph{a.s.}) finite, \emph{i.e.} $\mathbb{P}_x\left(\tau<+\infty\right)=1$, we are interested in acquiring their explicit distributions. Clearly, when $h(x)\equiv 0$ (standard Brownian motion) the distribution of $\tau$ is given by inverse Gaussian (or inverse Gamma, equivalently) \cite{borodin2012handbook}. However, for most of non-trivial drifts, there is no closed-form solution. An example is $h(x)=x$ and which corresponds to the Ornstein-Uhlenbeck (OU) process. In this case, the explicit density is only available by restricting $a=0$ \cite{going2003clarification}.

In this paper, we apply perturbation technique \cite{holmes2012introduction} to solve Dirichlet-type \emph{boundary value problems} (BVPs). By inverting the perturbed LTs from the frequency domain, where those LTs usually have much simpler forms, we then are able to derive closed-form densities in the time domain. The main contribution of this paper is to provide a unified recursive framework for solving the single barrier hitting problem. And according to the killed version of potential theory \cite{peskir2006optimal}, we prove convergence and error estimation results. As illustrations, we show perturbed FPTDs on OU and Bessel processes in this paper. An application on the Bubble (exponential-Shiryaev) process has been discussed in \cite{dassios2018economic}. At the end of this paper, theoretical part is confirmed via numerical exercises.

The rest of the paper is organised as follows: section 2 introduces main results; sections 3 and 4 demonstrate applications on OU and Bessel processes; in section 5 we show benchmarking exercises based on OU process (more results can be found in the appendix); section 6 concludes.

\section{Main Results}\label{sec13}
\subsection{Perturbed Dirichlet Problem}\label{epsilon_ode}
We follow our previous settings. Further let $C^2$ be the collection of functions with second order continuous derivatives. For any $f\in C^2$, also assume the infinitesimal generator $\mathcal{A}f(x)$ of $\set{X_t}_{t\ge 0}$ exists for all $x\in \mathcal{D}$. Then $\mathcal{A}\cdot$ is explicitly given by
\[
\mathcal{A}f(x)=\epsilon h(x)f^{'}(x) +\mathcal{G}f(x),
\]
where $\mathcal{G}\cdot$ is the infinitesimal generator for standard Brownian motion
\[
\mathcal{G}f(x)=\frac{1}{2}f^{''}(x);
\]
the prime notation refers to the derivative \emph{w.r.t.} $x$. And unless specified, we will use this conventional notation throughout the paper.

Consider $\beta\in\mathbb{C}$ with $Real(\beta)\ge 0$, define,
\begin{equation}\label{LTderivation}
f(x,\beta):=\mathbb{E}_x\left[e^{-\beta \tau}V\left(X_\tau\right)\right],
\end{equation}
where $V\left(\cdot\right)$ is a finite function. The first step of our work is to find a proper BVP which is satisfied by $f(x,\beta)$. To see this, we first need to show $\set{X_t}_{t\ge 0}$ is continuous over all stopping times. In fact, consider a sequence of stopping times $\left\{\sigma_n\right\}_{1\le n\le +\infty}$ such that
\[
\lim_{n\uparrow +\infty}\sigma_n=:\sigma.
\]
Since $\set{X_t}_{t\ge 0}$ has continuous path so 
\[
\lim_{n\uparrow +\infty}X_{\sigma_n}=X_\sigma.
\]
On the other hand, by our assumption, $\set{X_t}_{t\ge 0}$ is a strong Markov process. According to the killed version of potential theory \cite{peskir2006optimal}, $f(x,\beta)$ is the unique solution to the following Dirichlet problem:
\begin{equation}
\label{eqn0}
\mathcal{A}f(x)=\beta f(x),\ x\in \mathcal{D}.
\end{equation}
Moreover, the corresponding boundary conditions are given by
\begin{equation}
\label{bv0}
f(\partial\mathcal{D}_a)=\underline{V}.
\end{equation}
In our notation $\underline{V}:=\left[V(a),V(\pm \infty)\right]^T$ is a vector of the boundary values depending on the direction of crossing. Refer to \eqref{LTderivation}, by setting $V(a)=1$ and $V(\pm\infty)=0$ we immediately find that the solution to BVP \eqref{eqn0} and \eqref{bv0} is the LT for the density function of $\tau$:
\[
f(x,\beta)=\mathbb{E}_x\left[e^{-\beta \tau}\right].
\]

In the second step, we apply perturbations on $\epsilon$ and find perturbed BVPs accordingly. The perturbation approach is a common technique in solving asymptotics for complex systems. It has been successfully applied in quantum physics and mathematical finance \cite{schrodinger1926quantisierung, dassios2010perturbed, fouque2011multiscale}. Traditionally, it is required that the perturbation parameter should be small. However, we will show this is not necessary in our case.

For abbreviation, we ignore the function arguments in following contents. By default all operations are \emph{w.r.t.} $x$. Consider a sequence of $C^2$-functions $\left\{f_i\right\}_{i\ge 0}$ such that $f$ can be expressed as
\begin{equation}
\label{ptfb}
f=\sum_{i=0}^\infty \epsilon^i f_i.
\end{equation}
Substitute \eqref{ptfb} into \eqref{eqn0}, we have
\begin{equation}\label{eqn:firstsubstitution}
\sum_{i=0}^{\infty}\epsilon^i\left(\epsilon h f_i^{'} +\mathcal{G}f_i\right)=\sum_{i=0}^\infty \epsilon^i \beta f_i,\ \forall x\in \mathcal{D}.
\end{equation}
Rearrange terms in \eqref{eqn:firstsubstitution}, we further get
\[
\mathcal{G}f_0-\beta f_0+\sum_{i=1}^{\infty}\epsilon^i\left(h f_{i-1}^{'}+\mathcal{G}f_i-\beta f_i\right)=0,\ \forall x\in \mathcal{D}.
\]
Note that, by extracting the $0$-th order and assigning proper boundary conditions we can have the BVP for standard Brownian motion (where the LT inverse is already known). Higher orders can be solved via a recursive system which accumulates information from $f_0$ and the drift function $h$. 

Denote the BVP with $i=0$ by $o(1)$ term, by assigning same boundary conditions as in the initial problem, we have
\begin{align*}
o(1):\ & \mathcal{G}f_0=\beta f_0,\ x\in \mathcal{D}\\
& f_0(\partial \mathcal{D}_a)=[1,0]^T.
\end{align*}
For $i\ge 1$, we use the notation $o(\epsilon^i)$ and define
\begin{align*}
o\left(\epsilon^i\right):\ & \mathcal{G}f_i=\beta f_i-h\cdot f_{i-1}^{'},\ x\in \mathcal{D}\\
& f_i(\partial \mathcal{D}_a)=[0,0]^T.
\end{align*}
Based on the fact that the solution to the initial BVP is unique, one can check by solving the recursive system to infinite orders (not necessarily for small $\epsilon$) the sequence $\set{f_i}_{i\ge 0}$ reproduces the initial solution $f$, \emph{i.e.} equation \eqref{ptfb} always holds true. However, in practice, it is not realistic of having infinite order solutions. Also under very common circumstances, the convergence of the series may not be guaranteed. Therefore we need to decide a truncation order and estimate the corresponding error.

\subsection{Truncation Error and Convergence}\label{subsec23}
Further introduce some notations. Let $N\ge 1$ be a fixed integer, and for $i=1,...,N$ we denote the $N$-th order truncation of initial LT by
\begin{equation}\label{eqn:truncatedLT}
f^N:=\sum_{i=0}^N \epsilon^i f_i.
\end{equation}
Assume inverse LTs for $f$, $f^N$ and $\partial_xf_N(x,\beta)$ exist, and denote by
\[
p_\tau(t)=\LT^{-1}\left\{f(\beta)\right\}(t),
\]
\[
p_\tau^N(t)=\sum_{i=0}^N\epsilon^i\LT^{-1}\left\{f_i(\beta)\right\}(t),
\]
and
\begin{equation}\label{eqn:eqndefeta}
\eta(x,t)=\LT^{-1}\left\{\partial_xf_N(x,\beta)\right\}(t),
\end{equation}
respectively. Define the difference (absolute error) between two FPTDs by
\begin{equation}\label{errorfunctiondef}
q_\tau(t):=p_\tau(t)-p_\tau^N(t),
\end{equation}
then we have the following result.

\begin{prop}[Probabilistic Representation for the Truncation Error]\label{prop21}
For all $t\in (0,+\infty)$ and all $\beta\in\mathbb{C}$ with $Real(\beta)>0$, if
\begin{equation}\label{eqn:eqnboundnesscondition}
\int_0^{+\infty}e^{-\beta t}\mathbb{E}_x\left[\int_0^{\tau\wedge t}\left|h(X_u)\eta\left(X_u,t-u\right)\right|du\right]dt <+\infty,
\end{equation}
then
\begin{equation}\label{eqn:errorestimateqfunc}
q_\tau(t)=\epsilon^{N+1}\mathbb{E}_x\left[\int_0^{\tau\wedge t}h(X_u)\eta\left(X_u,t-u\right)du\right].
\end{equation}
Further, if for some constant $M<+\infty$
\begin{equation}\label{eqn:assumpboundnedness}
\left|\mathbb{E}_x\left[\int_0^{\tau\wedge t}h(X_u)\eta\left(X_u,t-u\right)du\right]\right|\le M,
\end{equation}
then
\begin{equation}\label{eqn:errorestimateboundedness}
\left|q_\tau(t)\right|\le \epsilon^{N+1}M.
\end{equation}
\end{prop}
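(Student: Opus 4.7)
The plan is to derive an identity in the frequency domain for $f-f^N$ via the Feynman--Kac/killed-potential representation, and then invert the Laplace transform using Fubini. The key algebraic observation is that the recursive system telescopes: inserting $f^N=\sum_{i=0}^N\epsilon^if_i$ into $(\mathcal{A}-\beta)$ and using $\mathcal{G}f_0=\beta f_0$ together with $\mathcal{G}f_i-\beta f_i=-h f_{i-1}'$ for $i\ge 1$, all middle terms cancel and one is left with the single residual term
\begin{equation*}
(\mathcal{A}-\beta)f^N(x,\beta)\;=\;\epsilon^{N+1}\,h(x)\,f_N'(x,\beta),\qquad x\in\mathcal{D}.
\end{equation*}
Since $f$ and $f^N$ share the same boundary values on $\partial\mathcal{D}_a$ (the $o(\epsilon^i)$ BVPs for $i\ge 1$ have zero boundary data), the difference $f-f^N$ solves the homogeneous BVP $(\beta-\mathcal{A})(f-f^N)=\epsilon^{N+1}hf_N'$ with zero boundary data.

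Next I would apply the killed version of potential theory quoted from \cite{peskir2006optimal} to this inhomogeneous Dirichlet problem. This yields the frequency-domain identity
\begin{equation*}
f(x,\beta)-f^N(x,\beta)\;=\;\epsilon^{N+1}\,\mathbb{E}_x\!\left[\int_0^\tau e^{-\beta s}\,h(X_s)\,f_N'(X_s,\beta)\,ds\right].
\end{equation*}
Substituting the representation $f_N'(X_s,\beta)=\int_0^{\infty}e^{-\beta r}\eta(X_s,r)\,dr$ from \eqref{eqn:eqndefeta} and applying the time-shift $t=s+r$, the double integral becomes $\int_0^\tau\int_s^\infty e^{-\beta t}h(X_s)\eta(X_s,t-s)\,dt\,ds$. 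Swapping the order of integration (with $s$ now running up to $\tau\wedge t$) gives a single outer Laplace integral in $t$.

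The step where I expect to spend the most care is justifying the interchange of expectation and the two integrations: this is precisely where the hypothesis \eqref{eqn:eqnboundnesscondition} is used. Under that assumption, Fubini's theorem applies and one obtains
\begin{equation*}
f(x,\beta)-f^N(x,\beta)\;=\;\int_0^{\infty}e^{-\beta t}\,\epsilon^{N+1}\,\mathbb{E}_x\!\left[\int_0^{\tau\wedge t}h(X_u)\,\eta(X_u,t-u)\,du\right]dt.
\end{equation*}
Since the left-hand side is, by definition, the Laplace transform of $q_\tau(t)$, uniqueness of the Laplace transform on the half-plane $\mathrm{Re}(\beta)>0$ identifies the integrand on the right as $q_\tau(t)$, which is \eqref{eqn:errorestimateqfunc}. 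Finally, bounding the absolute value of $q_\tau(t)$ by $\epsilon^{N+1}M$ under the stronger hypothesis \eqref{eqn:assumpboundnedness} is immediate, giving \eqref{eqn:errorestimateboundedness}.
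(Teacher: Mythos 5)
Your proposal is correct and follows essentially the same route as the paper: telescoping to obtain $(\mathcal{A}-\beta)(f-f^N)=-\epsilon^{N+1}hf_N'$ with zero Dirichlet data, then the killed-potential representation to get the frequency-domain identity, and finally Fubini plus uniqueness of the Laplace transform. The only cosmetic difference is direction — the paper defines the time-domain candidate $\tilde q_\tau$ first and transforms forward to match $f-f^N$, while you start from the $\beta$-domain identity and invert — and you should say ``inhomogeneous ODE with zero (homogeneous) boundary data'' rather than ``homogeneous BVP,'' but neither affects the argument.
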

\begin{proof}
Let $\eta\left(x,t\right)$ be defined as in \eqref{eqn:eqndefeta}, and introduce
\[
\tilde{q}_\tau(t):=\epsilon^{N+1}\mathbb{E}_x\left[\int_0^{\tau\wedge t}h(X_u)\eta\left(X_u,t-u\right)du\right].
\]
We first show $\tilde{q}_\tau(t)$ is the inverse LT of $f-f^N$. Then by the uniqueness of (inverse) LT, $\tilde{q}_\tau(t)$ is the error function in \eqref{errorfunctiondef}. Consider the LT of $\tilde{q}_\tau(t)$. By \eqref{eqn:eqnboundnesscondition} and the Fubini's theorem, we have
\begin{align*}
\int_0^\infty e^{-\beta t} \mathbb{E}_x\left[\int_0^{\tau\wedge t}h(X_u)\eta\left(X_u,t-u\right)du\right]dt&=\mathbb{E}_x\left[\int_0^{\tau} h(X_u)\int_0^\infty e^{-\beta t}\bold{1}_{\left\{u\le  t\right\}}\eta\left(X_u,t-u\right)dt du\right]\\
&=\mathbb{E}_x\left[\int_0^\tau h(X_u) \LT\left\{\bold{1}_{\left\{u\le  t\right\}}\eta\left(X_u,t-u\right)\right\}(\beta)du\right].
\end{align*}
According to the fact
\[
\LT\left\{\bold{1}_{\left\{u\le t\right\}}\eta(x,t-u)\right\}(\beta)=e^{-\beta u}\LT\left\{\eta(x,t)\right\}(\beta),
\]
and by the definition of $\eta(x,t)$, we therefore conclude
\begin{equation}
\label{LTform}
\LT\left\{\tilde{q}_\tau(t)\right\}(\beta)=\epsilon^{N+1}\mathbb{E}_x\left[\int_0^\tau h(X_u)e^{-\beta u}\frac{\partial }{\partial x}f_N(X_u,\beta)du\right].
\end{equation}

In the second part below, we show the right-hand side of \eqref{LTform} indeed is $f-f^N$. Let 
\[
Q( x,\beta):=f(x,\beta)-f^N(x,\beta).
\]
By the linearity of LT we have
\[
Q(x,\beta)=\LT\left\{p_\tau(t)\right\}(\beta)-\LT\left\{p^N_\tau(t)\right\}(\beta)=\LT\left\{q_\tau(t)\right\}(\beta).
\]
Since $f$ and $f^N$ are both in $C^2$, so is $Q$. Applying the operator $\mathcal{A}$ on $Q$ yields
\begin{align*}
\mathcal{A}Q-\beta Q &=\mathcal{A}f-\beta f-\left(\mathcal{A}f^N-\beta f^N\right)\\
&=0-\left[\mathcal{G}f_0-\beta f_0+\sum_{i=1}^N\epsilon^i\left(\mathcal{G}f_0-\beta f_0+h\cdot f_{i-1}^{'}\right)+\epsilon^N\cdot \epsilon h f_N^{'}\right]\\
&=-\epsilon^{N+1}h\cdot f_N^{'}.
\end{align*}
Note that $f$ and $f^N$ share the same boundary conditions, so for $Q(x)$ we have
\[
Q\left(\partial \mathcal{D}_a\right)=[0,0]^{T}.
\]
According to \cite{peskir2006optimal}, the ODE of $Q(x)$ is the killed version of Dirichlet problem and its solution has the following probabilistic representation:
\[
Q(x,\beta)=\epsilon^{N+1}\mathbb{E}_x\left[\int_0^\tau e^{-\beta u}h(X_u)\frac{\partial}{\partial x}f_N( X_u,\beta)du\right].
\]
This is indeed the right-hand side of \eqref{LTform}. By the uniqueness of BVP solution and the uniqueness of (inverse) LT, we conclude that
\[
q_\tau(t)=\tilde{q}_\tau(t).
\]
In the end, \eqref{eqn:errorestimateboundedness} is a direct result from assumption \eqref{eqn:assumpboundnedness} and \eqref{eqn:errorestimateqfunc}.
\end{proof}
\\

\begin{rmk}\label{rmk21}
For small $\epsilon$ and under condition \eqref{eqn:assumpboundnedness}, by proposition \ref{prop21} we see the $N$-th order perturbed FPTD converges to the true density at $O(\epsilon^{N+1})$. Moreover, this convergence is uniform on $t$. On the other hand, even for large $\epsilon$, one can always use \eqref{eqn:errorestimateqfunc} to check error levels.
\end{rmk}

\subsection{Recursions under Frequency Domain}\label{subsec21}
In this section, we provide a general mechanism for solving recursive BVPs. For simplicity\footnote{For an arbitrary hitting level $a$ we can use the affine transformation to retrieve the $0$-hitting case. Although not always, for the situation of hitting from below ($x<0$) we can consider the mirror reflection of $\set{X_t}_{t\ge 0}$.} we consider the FPT hitting $0$ from above, \emph{i.e.} 
\begin{equation}
\label{myhitting}
\tau:=\inf\left\{t\ge 0: X_t=0\bigg|X_0=x>0\right\}.
\end{equation}
Under this treatment the domain is specified by $\mathcal{D}=(0,+\infty)$. We suppress the notation $a$ (note that $a=0$), and denote the boundaries by $\partial \mathcal{D}:=\partial \mathcal{D}_a=\set{0,+\infty}$. 
\begin{lem}[Laplace Transform for the FPTD of Brownian Motion]\label{lemma31}
The unique solution to the $o(1)$ BVP is given by
\[
f_0(x,\beta)=e^{-\sqrt{2\beta}x}.
\]
\end{lem}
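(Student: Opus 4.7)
The plan is to recognise the $o(1)$ BVP as a standard linear constant-coefficient second-order ODE on the half-line, solve it by elementary means, and invoke the killed Dirichlet machinery already set up in this section for uniqueness. Spelled out, the problem to solve is
\[
\frac{1}{2}f_0''(x) = \beta\,f_0(x),\ x\in(0,+\infty),\qquad f_0(0)=1,\quad f_0(+\infty)=0.
\]

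First I would form the characteristic equation $\tfrac{1}{2}r^2=\beta$, whose roots $r=\pm\sqrt{2\beta}$ (principal branch of the complex square root) give the general solution $f_0(x)=Ae^{\sqrt{2\beta}\,x}+Be^{-\sqrt{2\beta}\,x}$. Next I would impose the boundary conditions: since $Real(\beta)>0$ implies $Real(\sqrt{2\beta})>0$, the decay requirement at $+\infty$ forces $A=0$, and then $f_0(0)=1$ fixes $B=1$. This yields the claimed formula $f_0(x,\beta)=e^{-\sqrt{2\beta}\,x}$.

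For uniqueness I would simply refer back to the killed-version potential theory cited from \cite{peskir2006optimal} in the paragraph introducing \eqref{eqn0}: the $o(1)$ BVP is literally the Dirichlet problem for the generator $\mathcal{G}$ of standard Brownian motion on $(0,+\infty)$ with boundary data $[1,0]^T$, so its solution is unique and coincides with $\mathbb{E}_x[e^{-\beta\tau_0}]$, where $\tau_0$ is the Brownian first passage time of $0$. There is no real obstacle here; the only mildly delicate point is the branch choice for $\sqrt{2\beta}$ and the exclusion of the growing mode, which is automatic with the principal branch whenever $Real(\beta)>0$ (and the purely imaginary case can be covered by continuity from the interior of the right half-plane). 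As a sanity check, $e^{-\sqrt{2\beta}\,x}$ is precisely the classical inverse-Gaussian Laplace transform of the Brownian FPT from $x$ to $0$ recorded in \cite{borodin2012handbook}, confirming that the zeroth-order term of the expansion reproduces the standard Brownian motion case as intended.
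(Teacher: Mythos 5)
Your derivation is correct and complete, but it takes a more explicit route than the paper does: the paper's entire proof of this lemma is a one-line citation to the handbook of Borodin and Salminen \cite{borodin2012handbook}, where the Laplace transform $\mathbb{E}_x[e^{-\beta\tau_0}]=e^{-\sqrt{2\beta}x}$ for Brownian first passage is tabulated. You instead solve the constant-coefficient ODE $\tfrac{1}{2}f_0''=\beta f_0$ from scratch, discard the growing mode using $\operatorname{Re}(\sqrt{2\beta})>0$, fix the constant with $f_0(0)=1$, and refer back to the killed Dirichlet framework of \cite{peskir2006optimal} for uniqueness. Both approaches are sound; yours has the advantage of being self-contained and of making explicit the branch-of-square-root issue that the paper leaves implicit, while the paper's citation is shorter and defers to a standard reference. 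There is no gap in your argument.
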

\begin{proof}
This is the standard result from \cite{borodin2012handbook}.
\end{proof}
\\

\begin{lem}[Recursive Solution to $o\left(\epsilon^i\right)$]\label{lemma32}
For $i\ge 1$, let
\[
\gamma:=\sqrt{2\beta}.
\] 
The unique solution to $o\left(\epsilon^i\right)$ is given by
\[
f_i(x,\beta)=f_0(x,\beta)\cdot\left[\int_0^x2 e^{2\gamma y}\int_0^y h(z) k_i(z,\beta) e^{-2\gamma z}dz dy + \frac{e^{2\gamma x}-1}{\gamma}c_i(\beta)\right],
\]
where 
\[
k_i(x,\beta)=2\gamma\int_{0}^{x}e^{2\gamma y}\int_{0}^{y}h
 \left( z \right) { k_{i-1}} (z,\beta) {e}^{-2\gamma z}
dzdy
-2e^{2\gamma x}\int_{0}^{x}h
 \left( y \right) { k_{i-1}} (y,\beta)e^{-2 \gamma y}dy
 -\left({e^{2\gamma x}+1}\right)c_{i-1}(\beta)
\]
and
\[
c_i(\beta)=-\gamma \lim_{x\uparrow +\infty}\left(e^{-2\gamma x}\int_0^x2 e^{2\gamma y}\int_0^y h(z) k_i(z,\beta) e^{-2\gamma z}dz dy \right).
\]
\end{lem}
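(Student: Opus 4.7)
The plan is to reduce the inhomogeneous second-order BVP for $f_i$ to a first-order ODE by factoring out the homogeneous solution $f_0=e^{-\gamma x}$ supplied by Lemma~\ref{lemma31}. Concretely, the $o(\epsilon^i)$ problem reads $f_i''-\gamma^2 f_i = -2 h\, f_{i-1}'$ with $f_i(0)=f_i(\infty)=0$. Writing $f_i = f_0 \cdot u_i$ and substituting gives $u_i''-2\gamma u_i' = -2 h f_{i-1}'/f_0$. If I \emph{define} the auxiliary function $k_i(x,\beta) := -f_{i-1}'(x,\beta)/f_0(x,\beta)$, the right-hand side collapses to $2 h k_i$, which is the form that appears under the double integral in the claim. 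So the key conceptual step is to recognise $k_i$ as the normalised derivative of the previous iterate.

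Next I would solve the resulting first-order linear equation for $v_i := u_i'$, namely $v_i'-2\gamma v_i = 2 h k_i$, via the integrating factor $e^{-2\gamma x}$, obtaining
\[
v_i(x) = e^{2\gamma x}\Bigl[v_i(0) + 2\int_0^x h(z) k_i(z,\beta) e^{-2\gamma z}\,dz\Bigr].
\]
Integrating once more with the boundary value $u_i(0)=0$ (from $f_i(0)=0$) reproduces the double-integral piece in the claim, plus a free term $\frac{e^{2\gamma x}-1}{\gamma}\cdot \frac{v_i(0)}{2}$. Setting $c_i(\beta):=v_i(0)/2$, the remaining boundary condition $f_i(\infty)=0$ forces cancellation of the $e^{\gamma x}$ growth in $f_i=e^{-\gamma x}u_i$; a L'H\^opital argument on the asymptotics of $e^{-2\gamma x}\int_0^x 2 e^{2\gamma y} G_i(y)\,dy$, where $G_i(y)=\int_0^y h k_i e^{-2\gamma z}dz$, pins down exactly the displayed formula for $c_i(\beta)$.

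To close the recursion, I would verify that the function $k_i=-f_{i-1}'/f_0$ genuinely has the explicit expression given in the lemma. This is a direct computation: differentiate the formula for $f_{i-1}$ (which, by the induction hypothesis, is of the same shape with index shifted by one), use $f_{i-1}'/f_0 = -\gamma u_{i-1} + u_{i-1}'$, and combine the arithmetic-series identity $\int_0^x e^{2\gamma y} dy = (e^{2\gamma x}-1)/(2\gamma)$ to consolidate the $c_{i-1}$ pieces into the $(e^{2\gamma x}+1)c_{i-1}$ term. The inductive base is $k_1 = -f_0'/f_0 = \gamma$, a constant, which is consistent with the formula when $k_0$ and $c_0$ are taken to correspond to the already-known solution for Brownian motion. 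Uniqueness follows from the general Dirichlet uniqueness invoked in Section~\ref{epsilon_ode} applied to $o(\epsilon^i)$, or equivalently from the fact that any two solutions of this linear BVP differ by a bounded combination of $e^{\pm \gamma x}$ vanishing at both endpoints, hence zero.

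The main obstacle I foresee is not the algebra but the tacit analytic side-conditions: for the expressions in the statement to make sense one needs the integrals $\int_0^\infty e^{-2\gamma z} h(z) k_i(z,\beta)\,dz$ to converge and the limit defining $c_i(\beta)$ to exist. Because $\operatorname{Re}(\gamma)>0$ and the recursion expresses $k_i$ as a bounded-growth functional of $h$ and $k_{i-1}$, one expects polynomial control in $x$ to be damped by $e^{-2\gamma z}$; making this rigorous for a general continuous $h$ is the only nontrivial point, and in the concrete OU and Bessel applications of Sections~3--4 it will need to be checked case by case.
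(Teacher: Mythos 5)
Your proposal is essentially the paper's own proof: the factorization $f_i = f_0 u_i$ is the paper's $f_i=f_0 g_i$, your $k_i:=-f_{i-1}'/f_0$ coincides with the paper's $k_i:=\gamma g_{i-1}-g_{i-1}'$ upon writing $f_{i-1}=f_0 g_{i-1}$, and the integrating-factor reduction plus the two boundary conditions fixing $C_2=0$ and $c_i$ match step for step. One minor loose end in your write-up: the recursive formula for $k_i$ in terms of $k_{i-1},c_{i-1}$ is derived by differentiating the double-integral form of $g_{i-1}$ and hence applies only for $i\ge 2$ — since $g_0\equiv 1$ is not of that form there is no consistent choice of $k_0,c_0$ that would reproduce it, and the base $k_1=\gamma$ (which you do compute correctly from $-f_0'/f_0$) must simply be supplied separately.
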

\begin{proof}
The uniqueness of $f_i$ follows from the Dirichlet-type BVP \cite{peskir2006optimal}. Consider $f_i$ is of the following form
\begin{equation}\label{eqnfigiexpression}
f_i:=f_0g_i.
\end{equation}
Then substituting \eqref{eqnfigiexpression} into $o\left(\epsilon^i\right)$-ODE yields 
\begin{equation}
\label{eqnOrg}
\frac{1}{2}g^{''}_i-\gamma g_i^{'}=h\left[\gamma g_{i-1}-g_{i-1}^{'}\right].
\end{equation}
Note $g_{i-1}$ and its derivative are determined in the $i$-th order. We denote by
\begin{equation}
\label{eqnK}
k_i:=\gamma g_{i-1}-g_{i-1}^{'}.
\end{equation}
Then equation \eqref{eqnOrg} can be rewritten as
\begin{equation}
\label{eqnsimplified}
\frac{1}{2}g^{''}_i-\gamma g_i^{'}=hk_i.
\end{equation}
Multiply $e^{-2\gamma x}$ and take integrations on both sides of \eqref{eqnsimplified}, we have
\[
\int \frac{1}{2}g^{''}_ie^{-2\gamma x}dx -\int \gamma g_i^{'}e^{-2\gamma x}dx = \int hk_ie^{-2\gamma x}dx + C_1.
\] 
Apply integral by parts, for the left-hand side we get
\begin{align*}
\int \frac{1}{2}g^{''}_ie^{-2\gamma x}dx -\int \gamma g_i^{'}e^{-2\gamma x}dx&= \frac{1}{2}g_i^{'}e^{-2\gamma x}+\gamma \int g_i^{'} e^{-2\gamma x}dx -\int \gamma g_i^{'}e^{-2\gamma x}dx= \frac{1}{2}g_i^{'}e^{-2\gamma x}.
\end{align*}
Further multiply $2e^{2\gamma x}$ and take integrals on both sides,
\begin{equation}\label{eqngeneralsolutiongi}
g_i=\int_{A_1}^x 2e^{2\gamma y} \left[\int_{A_2}^y hk_ie^{-2\gamma z}dz + C_1\right]dy +C_2.
\end{equation}
\emph{W.l.o.g.}, we let $A_1=A_2=0$, and by considering the boundary condition at $x=0$ we have $C_2=0$. On the other hand, note that for fixed $i$, $C_1$ is a function related to order $i$ and depends on $\beta$, so we rewrite $c_i(\beta):=C_1$. Further simplify $g_i$ with new notations, we get
\begin{equation}
\label{eqnGi}
g_i=\int_0^x2 e^{2\gamma y}\int_0^y h k_i e^{-2\gamma z}dz dy + \frac{e^{2\gamma x}-1}{\gamma}c_i.
\end{equation}
To determine $c_i$, we use the boundary condition at $+\infty$. By solving $\lim_{x\uparrow+\infty}f_i(x,\beta)=0$ we get
\begin{equation}
\label{eqnCi}
c_i=-\gamma \lim_{x\uparrow +\infty}\left(e^{-2\gamma x}\int_0^x2 e^{2\gamma y}\int_0^y h k_i e^{-2\gamma z}dz dy \right).
\end{equation}
Substitute \eqref{eqnGi} into equation \eqref{eqnK}, and after standard calculations we get
\begin{equation}
\label{eqnKrecur}
k_{i}=
2\gamma\int_{0}^{x}e^{2\gamma y}\int_{0}^{y}h { k_{i-1}}  {e}^{-2\gamma z}
dzdy
-2e^{2\gamma x}\int_{0}^{x}h
  { k_{i-1}} e^{-2 \gamma y}dy
  - \left({e^{2\gamma x}+1}\right)c_{i-1}.
\end{equation}
This concludes our proof.
\end{proof}
\\

\begin{rmk}
Potentially, using lemma \ref{lemma32} we can solve the LT of perturbed FPTD to orders as many as we wish. With the help of symbolic calculation softwares (e.g. Maple, Python), calculations would become even simpler.
\end{rmk}

\section{Ornstein-Uhlenbeck Process}\label{sec4}
The OU process was first introduced to describe the velocity of a particle that follows a Brownian motion movement \cite{uhlenbeck1930theory}. Later the process appears widely in neural science \cite{wan1982neuronal, lansky1995comparison} and mathematical finance \cite{vasicek1977equilibrium, jarrow1995pricing, schwartz1997stochastic, heston1993closed}, etc. According to our setting, the $h$-function for OU process is given by
\begin{equation}\label{hfunctionforOU}
h(x)=\theta -x,
\end{equation}
where $\theta$ is the equilibrium parameter in the OU model. Note that the function above is a first order polynomial and which is still a polynomial under integration and differential. Since our recursive framework mainly involves those two operations, therefore we may expect a beautiful result from perturbation. 

Refer to \cite{uhlenbeck1930theory, wang1945theory}, the OU SDE has a unique and strong solution. Moreover, as a strong Markov process it is recurrent and continuous in probability. Our framework therefore can be applied. Let $\epsilon>0$ be the mean-reverting rate, the infinitesimal generator is given by
\[
\mathcal{A}f=\frac{1}{2}\frac{\partial ^2f}{\partial x^2}+\epsilon(\theta - x)\frac{\partial f}{\partial x}.
\]
Consider hitting from above in \eqref{myhitting}. The solution of initial BVP \eqref{eqn0} and \eqref{bv0} is given by \cite{borodin2012handbook}. And for the numerical inverse of initial LT one can find in \cite{abate2006unified}. Under a special case that $\theta=0$, the explicit FPTD is given by \cite{going2003clarification, alili2005representations}. Now we solve the explicit FPTD using perturbation technique. Follow lemma \ref{lemma32}, for $i=1$ we immediately have
\[
f_1(x,\beta)=e^{-\gamma x}\left(\frac{x^2}{2}+\frac{(1-2\theta\gamma)x}{2\gamma}\right).
\]

\begin{prop}[Recursive Polynomial]
\label{prop:prop1}
For $i\ge 2$, the solution of $o(\epsilon^i)$-BVP for the OU process is given by
\[
f_i(x,\beta)=f_0(x,\beta) g_i(\gamma x),
\]
where
\begin{equation}\label{eqn:OUgi}
g_i(y):=\sum_{j=1}^{2i}p^{(i)}_jy^j.
\end{equation}
The coefficients $\{p^{(i)}_j: i\ge 2, 1\le j\le 2i\}$ are given by:
\[
\begin{cases}
j=2i:& p^{(i)}_{2i}=\frac{p^{(i-1)}_{2i-2}}{2 i\gamma^2}\\
\\
j=2i-1:& p^{(i)}_{2i-1}=\frac{1}{ (2i-1)\gamma^2}p^{(i-1)}_{2i-3}+\left(\frac{1}{2\gamma^2}-\frac{\gamma \theta +(2i-2)}{ (2i-1)\gamma^2}\right)p^{(i-1)}_{2i-2}\\
\\
2< j< 2i-1:& p^{(i)}_{j}=\frac{1}{2}(j+1)p^{(i)}_{j+1}+\frac{1}{\gamma^2 j}p^{(i-1)}_{j-2}-\frac{j-1+\gamma\theta}{\gamma^2j}p^{(i-1)}_{j-1}+\frac{\theta}{\gamma}p^{(i-1)}_{j}\\
\\
j=2:& p^{(i)}_{2}=\frac{3}{2}p^{(i)}_{3}-\frac{1+\gamma\theta}{2\gamma^2}p^{(i-1)}_{1}+\frac{\theta}{\gamma}p^{(i-1)}_{2}\\
\\
j=1:& p^{(i)}_{1}=p^{(i)}_{2}+\frac{\theta}{\gamma}p^{(i-1)}_{1}
\end{cases}.
\]
\end{prop}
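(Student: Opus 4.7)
I would prove Proposition~\ref{prop:prop1} by induction on $i$, with the engine being a change of variable that reduces everything to a polynomial identity. The base case $i=1$ is immediate from the explicit $f_1$ stated just before the proposition: setting $g_1(y) := \frac{y^2}{2\gamma^2} + \frac{1-2\theta\gamma}{2\gamma^2}\,y$ gives $f_1(x,\beta) = f_0(x,\beta) g_1(\gamma x)$, hence $p_2^{(1)} = 1/(2\gamma^2)$ and $p_1^{(1)} = (1-2\theta\gamma)/(2\gamma^2)$, which is consistent with the polynomial structure.

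For the inductive step, I assume $g_{i-1}(y) = \sum_{j=1}^{2(i-1)} p_j^{(i-1)} y^j$. Substituting the ansatz $f_i(x,\beta) = f_0(x,\beta) g_i(\gamma x)$ into the $o(\epsilon^i)$ ODE, dividing by $f_0 = e^{-\gamma x}$, and passing to the scaled variable $y = \gamma x$ (so $h(x) = \theta - y/\gamma$) rewrites the recursion as the polynomial identity
\[
\frac{1}{2} g_i''(y) - g_i'(y) = \left(\frac{\theta}{\gamma} - \frac{y}{\gamma^2}\right)\bigl[g_{i-1}(y) - g_{i-1}'(y)\bigr],
\]
whose right-hand side is, by hypothesis, a polynomial in $y$ of degree $2i-1$. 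The operator $\tfrac{1}{2}D^2 - D$ sends $y^n$ to $\tfrac{n(n-1)}{2} y^{n-2} - n y^{n-1}$, so on the space of polynomials without constant term it is lower triangular with nonzero diagonal entries $-1,-2,\ldots$; a unique polynomial particular solution of degree $2i$ with no constant term therefore exists, and its coefficients can be extracted by matching powers of $y$ from the top down.

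To identify this particular solution with the actual $g_i$, I would use that the homogeneous ODE has solution space $\mathrm{span}\{1, e^{2y}\}$. The two BVP conditions kill both homogeneous modes: $f_i(0,\beta) = 0$ forces $g_i(0) = 0$, eliminating the constant, and $\lim_{x\uparrow\infty} f_i(x,\beta) = 0$ forbids an $e^{2y}$ contribution (since $e^{-\gamma x} e^{2\gamma x} = e^{\gamma x}$ would blow up). By Lemma~\ref{lemma32} the BVP solution is unique, so the constructed polynomial is exactly $g_i$.

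The remainder is equating coefficients of $y^{j-1}$ in the displayed identity for $j=1,\ldots,2i$. A single master recurrence emerges, and under the conventions $p_0^{(i-1)} = 0$ and $p_k^{(i-1)} = 0$ for $k > 2(i-1)$ it collapses to the four stated cases: $j = 2i$ comes from the highest-degree match $-2i\,p_{2i}^{(i)} = -p_{2i-2}^{(i-1)}/\gamma^2$, the interior $2 < j < 2i-1$ is the unified formula, and the endpoint cases $j \in \{1,2,2i-1\}$ are just the unified formula with out-of-range $p^{(i-1)}$'s suppressed and $p_{2i}^{(i)}$ substituted from the top equation. The main obstacle is purely bookkeeping — verifying that each endpoint specialization recovers the exact stated closed form — but no further analysis is required.
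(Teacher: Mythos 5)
Your proposal is correct, and it carries out precisely the route the paper hints at (``directly derive results from ODE \eqref{eqnsimplified}'')\,---\,you substitute the polynomial ansatz $g_i(\gamma x)$ into $\tfrac12 g_i'' - \gamma g_i' = h\,k_i$, rescale by $y=\gamma x$, and match coefficients top-down, using the triangular structure of $\tfrac12 D^2 - D$ together with the two boundary conditions to rule out the homogeneous modes $1$ and $e^{2y}$. The verification of the endpoint cases $j\in\{1,2,2i-1,2i\}$ (including the substitution of the already-solved $p^{(i)}_{2i}$ into the $j=2i-1$ line) checks out; this is exactly the bookkeeping the paper ``leaves to readers.''
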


\begin{proof}
One can try to solve coefficients from the recursive algorithm in lemma \ref{lemma32}. Another approach, which is much simpler, is to directly derive results from ODE \eqref{eqnsimplified}. We leave details of proof to readers.
\end{proof}
\\

Proposition \ref{prop:prop1} enables us to expand perturbed LT to arbitrary orders. However, our final aim is to find its inverse while coefficients in the proposition are functions of the parameter $\beta$. In order to avoid unnecessary symbolic calculations, we further decompose the coefficients $\left\{p^{(i)}_j:i\ge 0,\ 1\le j\le 2i \right\}$.

\begin{lem}[Recursive Polynomial Decomposition]
\label{lemma2}
For each $p_j^{(i)}$ in $\left\{p_j^{(i)}:i\ge 1,\ 1\le j \le 2i\right\}$, there exists a triply-indexed real sequence $$\left\{c_k^{(i,j)}: i\ge 1,\ 1\le j \le 2i,\ 0\le k \le (2i-j)\wedge i\right\}$$ such that
\[
p_j^{(i)}=\sum_{k=0}^{(2i-j)\wedge i}c_k^{(i,j)}\left(\frac{1}{\gamma}\right)^{2i-k}(\theta)^k.
\]
\end{lem}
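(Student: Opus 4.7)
The plan is to establish the decomposition by induction on $i$, carrying along the structural invariant that every monomial appearing in $p_j^{(i)}$ has the shape $\theta^k/\gamma^{2i-k}$ (so the total ``weight'' $m+k$, where $m$ is the exponent on $1/\gamma$, is always $2i$), together with the degree bound $k \le (2i-j) \wedge i$. For the base case $i=1$, I would simply substitute $x=y/\gamma$ into the explicit formula for $f_1(x,\beta)$ given just before Proposition \ref{prop:prop1} and read off $p_2^{(1)}=1/(2\gamma^2)$ and $p_1^{(1)}=1/(2\gamma^2)-\theta/\gamma$, which fit the claimed form with $c_0^{(1,2)}=1/2$, $c_0^{(1,1)}=1/2$, $c_1^{(1,1)}=-1$.

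For the inductive step, I would fix $i\ge 2$ and assume the decomposition holds at level $i-1$. Because the recursion in Proposition \ref{prop:prop1} expresses $p_j^{(i)}$ in terms of $p_{j+1}^{(i)}$ (same level) as well as $p_{j-2}^{(i-1)}$, $p_{j-1}^{(i-1)}$ and $p_j^{(i-1)}$ (previous level), I would run an inner induction on $j$ going downward from $j=2i$ to $j=1$. For each of the five regimes of the recursion ($j=2i$, $j=2i-1$, generic $2<j<2i-1$, $j=2$, $j=1$) I would plug in the inductive representation and verify two properties: first, that each new monomial still satisfies $m+k=2i$; second, that the degree bound $k\le (2i-j)\wedge i$ is preserved.

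The structural fact driving the weight invariant is that every coefficient in the recursion falls into one of two patterns: multiplication by $1/\gamma^2$ (which shifts $(m,k)\mapsto(m+2,k)$) or multiplication by $\theta/\gamma$ (equivalently $\gamma\theta/\gamma^2$), which shifts $(m,k)\mapsto(m+1,k+1)$. Both shift $m+k$ by exactly $2$, matching the jump from level $i-1$ to level $i$; and the purely within-level term $\tfrac{1}{2}(j+1)p_{j+1}^{(i)}$ does not touch $m+k$ at all, consistent with staying at level $i$. For the degree bound, the worst case is the factor $-\gamma\theta/(\gamma^2 j)$ acting on $p_{j-1}^{(i-1)}$, which raises $k$ by one; there I must check $((2i-j-1)\wedge(i-1))+1\le (2i-j)\wedge i$, which follows by a short case-split on which arm attains the minimum. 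The other contributions are easier since they either preserve $k$ or start from a strictly smaller bound.

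I expect the main obstacle to be purely bookkeeping rather than conceptual: one has to carry the case analysis cleanly across all five regimes of Proposition \ref{prop:prop1}, and be careful at the boundary indices $j=1,2$ and $j=2i-1,2i$ where several of the source terms degenerate or vanish. Once the invariant $m+k=2i$ is in hand, explicit recursions for $c_k^{(i,j)}$ can be read off by matching coefficients of $\theta^k/\gamma^{2i-k}$ on both sides of each regime of the recursion, which simultaneously proves existence and gives a constructive formula for the triply-indexed sequence.
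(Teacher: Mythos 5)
Your proposal is correct and takes essentially the same route as the paper: the paper also proves the lemma by substituting the claimed decomposition into the recursion of Proposition~\ref{prop:prop1} and matching coefficients, which yields the explicit recursion for $c_k^{(i,j)}$ displayed in Corollaries~\ref{cor2}--\ref{cor3} in the appendix, with existence then following inductively. Your write-up merely makes the implicit invariants explicit (the weight conservation $m+k=2i$ under the $1/\gamma^2$ and $\theta/\gamma$ shifts, and the degree bound $k\le(2i-j)\wedge i$), and uses only $i=1$ as the hardcoded base whereas the paper hardcodes $i=1,2$ and starts the recursive corollary at $i\ge 3$.
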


\begin{proof}
The recursive structure of $\set{c_k^{(i,j)}}$ can be seen in corollaries \ref{cor2} and \ref{cor3} in appendix. One can immediately get corollary \ref{cor2} by comparing it with $f_1$ and $f_2$. For corollary \ref{cor3}, results are acquired by substituting the decomposition in lemma \ref{lemma2} into proposition \ref{prop:prop1}. The existence in lemma \ref{lemma2} is then proved by corollaries \ref{cor2} and \ref{cor3}.
\end{proof}
\\

\begin{rmk}
Note that $\left\{c_k^{(i,j)}\right\}$ is parameter-independent. Therefore we can pre-calculate the sequence and save it in memory. Later, this will help in enhancing the FPTD computational efficiency.
\end{rmk}

\begin{prop}[$N$-th Order Perturbed FPTD of OU Process]
\label{prop402}
Let $N\in \mathbb{Z}^+$, the $N$-th order perturbed downward FPTD of OU process is
\[
{p}_{\tau}^{(N)}(t)=\frac{e^{-\frac{x^2}{4t}}}{\sqrt{2\pi}}\sum_{n=0}^{2N-1}h_nt^{\frac{n}{2}-1}D_{-n+1}\left(\frac{x}{\sqrt{t}}\right),
\]
where
\[
h_n=\sum_{\substack{i,j,k;\\2i-j-k=n}}\epsilon^i c_k^{(i,j)}\theta^kx^j,
\]
and $D_\cdot(\cdot)$ is the \emph{parabolic cylinder function}.
\end{prop}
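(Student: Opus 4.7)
The plan is to invert $f^N(x,\beta) = \sum_{i=0}^N \epsilon^i f_i(x,\beta)$ term by term after regrouping in powers of $\gamma^{-1}$, where $\gamma=\sqrt{2\beta}$. The algebra is driven by Proposition \ref{prop:prop1} and Lemma \ref{lemma2}; the analysis reduces to a single Laplace-inversion identity involving the parabolic cylinder function.

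First, I would combine Proposition \ref{prop:prop1} with Lemma \ref{lemma2}: for $i\ge 1$, a generic monomial in $f_i$ reads
\[
\epsilon^i c_k^{(i,j)} \theta^k \gamma^{k-2i} (\gamma x)^j e^{-\gamma x} = \epsilon^i c_k^{(i,j)} \theta^k x^j\, \gamma^{-(2i-j-k)} e^{-\gamma x}.
\]
Setting $n := 2i-j-k$, the constraints $1\le j\le 2i$ and $0\le k\le (2i-j)\wedge i$ force $n\in\{0,1,\ldots,2i-1\}$; the zeroth-order term $f_0 = e^{-\gamma x}$ is absorbed consistently at $n=0$ (by setting $c_0^{(0,0)}=1$ formally). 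Summing over $i=0,\ldots,N$ and collecting by $n$,
\[
f^N(x,\beta) = \sum_{n=0}^{2N-1} h_n \cdot \gamma^{-n} e^{-\gamma x},\qquad h_n = \sum_{\substack{i,j,k \\ 2i-j-k=n}} \epsilon^i c_k^{(i,j)} \theta^k x^j.
\]
Crucially, the coefficients $h_n$ depend only on $(\epsilon,\theta,x)$ and not on $\beta$, so they factor out of any Laplace inversion in $\beta$.

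Next, I would apply $\LT^{-1}$ termwise using the identity
\[
\LT^{-1}\!\left\{(2\beta)^{-n/2} e^{-x\sqrt{2\beta}}\right\}(t) = \frac{t^{n/2-1}}{\sqrt{2\pi}} e^{-x^2/(4t)} D_{-n+1}\!\left(\frac{x}{\sqrt{t}}\right),\qquad n\in\mathbb{Z}_{\ge 0},\ x>0,
\]
which is the main analytic ingredient. One can either cite it from standard tables of Laplace transforms of parabolic cylinder functions, or prove it by induction on $n$. The base case $n=0$, using $D_1(z) = z e^{-z^2/4}$, reproduces the inverse Gaussian density $\frac{x}{\sqrt{2\pi t^3}}e^{-x^2/(2t)}$ of Lemma \ref{lemma31}; a quick consistency check at $n=1$ (via $D_0(z)=e^{-z^2/4}$) and $n=2$ (via $D_{-1}(z) = e^{z^2/4}\sqrt{\pi/2}\,\mathrm{erfc}(z/\sqrt{2})$) matches the classical inversions of $\beta^{-1/2}e^{-a\sqrt{\beta}}$ and $\beta^{-1}e^{-a\sqrt{\beta}}$. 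The induction step combines the contiguity relation $D_{\nu+1}(z) = z D_\nu(z) - \nu D_{\nu-1}(z)$ with the Laplace rule $\LT\{\int_0^t F(s)\,ds\}(\beta) = \beta^{-1}\LT\{F\}(\beta)$, after the substitution $z = x/\sqrt{t}$.

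Finally, by the linearity of $\LT^{-1}$ and the $\beta$-independence of $h_n$, termwise inversion of the displayed expression for $f^N$ yields exactly the formula claimed in the proposition. The main obstacle is the Laplace identity for $(2\beta)^{-n/2} e^{-x\sqrt{2\beta}}$; the remainder of the argument is index bookkeeping on the triple $(i,j,k)\leftrightarrow n = 2i-j-k$ together with linearity. A minor cleanup item is to verify rigorously that the inversion can be performed termwise, which follows immediately since the sum over $n$ is finite for each fixed $N$.
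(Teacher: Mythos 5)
Your proposal is correct and follows essentially the same approach as the paper: both regroup $f^N$ by powers of $\gamma^{-1}$ (equivalently $\beta^{-n/2}$) using Proposition \ref{prop:prop1} and Lemma \ref{lemma2}, then invert termwise via the parabolic cylinder transform identity, which the paper cites directly from Bateman's tables. The consistency checks at $n=0,1,2$ and the suggested induction via contiguity relations are optional extras but do not change the route.
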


\begin{proof}
Express the truncated LT using \eqref{eqn:truncatedLT} and \eqref{eqnfigiexpression}. According to proposition \ref{prop:prop1} and lemma \ref{lemma2}, 
\begin{equation}\label{eqnfN}
f^N=\sum_{i=0}^{N}\epsilon^i e^{-\gamma x}\sum_{j=1}^{2i}{\left(\sum_{k=0}^{(2i-j)\wedge i}{c_k^{(i,j)}\left(\frac{1}{\gamma}\right)^{2i-k-j}\theta^k}\right) x^j}.
\end{equation}
For $0\le n \le 2N-1$, define 
\[
\hat{h}_n:=2^{-\frac{n}{2}}h_n.
\]
Then after standard calculations, we can write \eqref{eqnfN} as
\begin{equation}\label{fNLT}
f^N=\sum_{n=0}^{2N-1}\hat{h}_n\cdot e^{-\sqrt{2\beta} x}\beta^{-\frac{n}{2}}.
\end{equation}
Note that $\hat{h}_n$ is independent to $\beta$. By referring to \cite{bateman1954tables} we find for $0\le n\le 2N-1$,
\begin{equation}\label{DinverseTransform}
\LT^{-1}\set{e^{-\sqrt{2\beta} x}\beta^{-\frac{n}{2}}}(t)=\frac{2^{\frac{n}{2}}t^{\frac{n}{2}-1}}{\sqrt{2\pi}}e^{-\frac{x^2}{4t}}D_{-n+1}\left(\frac{x}{\sqrt{t}}\right).
\end{equation}
This immediately gives us the result.
\end{proof}
\\

\begin{rmk}
According to \cite{borodin2012handbook}, the initial LT of the OU FPTD is given by the ratio of two parabolic cylinder functions. In the end, by applying perturbations we find the FPTD itself (which is an inverse LT) is expressed as a series of parabolic cylinder functions with the first argument to be integers. 
\end{rmk}

The integer-valued $D$-function is closely related to Hermite polynomials \cite[12.7 (i)]{lozier2003nist}. Based on the fact \cite[12.7.1]{lozier2003nist} that 
\begin{equation}\label{eqn:D0}
D_0(z)=e^{-\frac{1}{4}z^2},
\end{equation}
and according to \cite[12.7.2]{lozier2003nist} we have 
\begin{equation}\label{eqn:D1}
D_1\left(\frac{x}{\sqrt{t}}\right)=\frac{x}{\sqrt{t}}e^{-\frac{x^2}{4t}}.
\end{equation}
Using expressions above we can write
\begin{equation}\label{new27}
{p}_{\tau}^{(N)}(t)=(h_0+\frac{h_1}{x}t)p_\tau^{(0)}(t)+\frac{e^{-\frac{x^2}{4t}}}{\sqrt{2\pi}}\sum_{n=2}^{2N-1}h_nt^{\frac{n}{2}-1}D_{-n+1}\left(\frac{x}{\sqrt{t}}\right),
\end{equation}
where $p_\tau^{(0)}(t)$ is the FPTD of standard Brownian motion of hitting $0$, and which follows the inverse Gamma distribution. From this point of view, the $N$-th order OU perturbed FPTD is indeed the FPTD of Brownian motion plus higher order corrections.

\begin{prop}[Tail Asymptotics of OU Perturbed FPTD]\label{prop:tailasmptotocsou}
For $N\ge 1$, tail asymptotics of the $N$-th order perturbed OU FPTD are given by
\begin{align*}
p_\tau^{(N)}(t)&\sim p_\tau^{(0)}(t),\ \text{as }t\downarrow 0^{+},\tag*{(Left Tail)}\\
p_\tau^{(N)}(t)&\sim t^{N-\frac{3}{2}}\tag*{(Right Tail)},\ \text{as }t\uparrow +\infty.
\end{align*}
\end{prop}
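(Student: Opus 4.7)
The plan is to work directly from the closed-form expression in Proposition \ref{prop402}, which presents $p_\tau^{(N)}(t)$ as a finite linear combination of terms $h_n t^{n/2-1} D_{-n+1}(x/\sqrt{t})$ multiplied by $e^{-x^2/(4t)}/\sqrt{2\pi}$. Since this is a finite sum of explicit special functions, both tail asymptotics reduce to identifying the dominant summand at each end of the time axis, using the known behaviour of the parabolic cylinder function $D_\nu(z)$ at $z\to +\infty$ and at $z\to 0^+$.

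For the left tail $t\downarrow 0^+$, the relevant regime is $z:=x/\sqrt{t}\to +\infty$, where the classical asymptotic $D_\nu(z)\sim z^{\nu}e^{-z^2/4}$ applies. Substituting this into the $n$-th summand I get a contribution of order $h_n\,x^{1-n}\,t^{n-3/2}\,e^{-x^2/(2t)}/\sqrt{2\pi}$. The exponential factor $e^{-x^2/(2t)}$ is common to every term, so the comparison between terms reduces to comparing the pure powers $t^{n-3/2}$ for $n=0,1,\dots,2N-1$. As $t\downarrow 0^+$, the smallest exponent dominates, and this is $t^{-3/2}$ at $n=0$. Combining with $h_0=1$ (inherited from the $f_0=e^{-\gamma x}$ contribution) gives precisely $p_\tau^{(0)}(t)=(x/\sqrt{2\pi})\,t^{-3/2}e^{-x^2/(2t)}$, while every other term is suppressed by an extra factor of $t$ relative to it; hence $p_\tau^{(N)}(t)/p_\tau^{(0)}(t)\to 1$.

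For the right tail $t\uparrow +\infty$, the argument $z=x/\sqrt{t}\downarrow 0$ and the prefactor $e^{-x^2/(4t)}\to 1$. Since $D_\nu$ is continuous at the origin with $D_\nu(0)=\sqrt{\pi}\,2^{\nu/2}/\Gamma((1-\nu)/2)$, the $n$-th summand asymptotes to $h_n D_{-n+1}(0)\,t^{n/2-1}/\sqrt{2\pi}$. The dominant term is now the one with the \emph{largest} power of $t$, i.e. $n=2N-1$, yielding the advertised rate $t^{N-3/2}$. The constant $D_{-2N+2}(0)$ is automatically nonzero because $\Gamma$ has no zeros, so it remains only to check that $h_{2N-1}$ is nonzero. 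From the constraint $2i-j-k=2N-1$ with $1\le j$, $0\le k$ and $i\le N$, the unique admissible index triple is $(i,j,k)=(N,1,0)$, so $h_{2N-1}=\epsilon^N c_0^{(N,1)} x$, and nonvanishing of the coefficient reduces to $c_0^{(N,1)}\neq 0$.

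The main obstacle is this last nonvanishing check, which is a purely combinatorial claim about the triply-indexed sequence introduced in Lemma \ref{lemma2}. I would dispatch it by induction on $N$, restricting attention to the $k=0$ slice (i.e. the $\theta$-independent part) of the recursion in Proposition \ref{prop:prop1}. Setting $\theta=0$ in that recursion produces a clean positive system: each $p_j^{(i)}\big|_{\theta=0}$ is expressible as a strictly positive rational multiple of $\gamma^{-2i}$, and in particular $c_0^{(N,1)}$ is strictly positive. All remaining steps—bounding the non-dominant summands, swapping $\sim$ between terms that share a common exponential—are routine, so the core content of the proposition really is the asymptotic regime of $D_\nu$ paired with this one combinatorial positivity statement.
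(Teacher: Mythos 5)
Your overall strategy matches the paper's: express $p_\tau^{(N)}$ via Proposition \ref{prop402}, then read off the dominant summand using known asymptotics of $D_\nu$. For the left tail you use the same $D_\nu(z)\sim z^\nu e^{-z^2/4}$ asymptotic as the paper. For the right tail you substitute $D_\nu(0)$ directly, while the paper instead passes through the Kummer-function representation and its power series; yours is the cleaner route, though note that for $n=0$ you have $D_1(0)=0$, so that summand actually contributes $t^{-3/2}$ rather than your advertised $t^{n/2-1}=t^{-1}$ — this is harmless for the conclusion but the formula ``$h_n D_{-n+1}(0)t^{n/2-1}$'' is not correct as written for $n=0$. (The paper sidesteps this by splitting off the $n=0,1$ terms in equation \eqref{new27}.)

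Two claims in your proposal are actually wrong, however. First, ``$h_0=1$'' is false: the constraint $2i-j-k=0$ admits $(i,j,k)=(1,2,0)$ and $(1,1,1)$, so $h_0 = 1 + \epsilon(x^2/2 - \theta x) + O(\epsilon^2)$, and the strict ratio $p_\tau^{(N)}(t)/p_\tau^{(0)}(t)\to h_0\ne 1$ as $t\downarrow 0$. (The paper's proof shares this imprecision — the $\sim$ there should be read as ``same order in $t$,'' not asymptotic equivalence of constants.) Second, and more seriously, the combinatorial positivity you propose does not hold. From Corollary \ref{cor2}, $c_0^{(1,1)}=\tfrac12>0$ but $c_0^{(2,1)}=-\tfrac18<0$; equivalently, setting $\theta=0$ in Proposition \ref{prop:prop1}, the recursion for $j=2i-1$ has the coefficient $\tfrac{1}{2\gamma^2}-\tfrac{2i-2}{(2i-1)\gamma^2}$, which is negative for $i\ge 2$, so the $\theta=0$ slice is emphatically not a positive system. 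Your induction would therefore not get off the ground. This means the nonvanishing of $h_{2N-1}$ (i.e. of $c_0^{(N,1)}$), which you correctly identify as the crux of the right-tail statement, is not established by your argument. It is worth saying that the paper also does not verify this — it simply takes $n=2N-1$ to be dominant without checking that its coefficient is nonzero — so you have located a genuine gap shared by the published proof; your proposed repair just doesn't close it.
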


\begin{proof}
We start from the left tail. When $t\downarrow 0^{+}$, it is clear from \eqref{new27} that
\[
(h_0+\frac{h_1}{x}t)p_\tau^{(0)}(t)\sim p_\tau^{(0)}(t).
\]
We need to further check the asymptotic for the series of $D$-functions. Denote by $z:=\frac{x}{\sqrt{t}}$ and $a=n-\frac{3}{2}$ ($2\le n\le 2N-1$). Refer to \cite[19.8.1, page 689]{abramowitz1964handbook} and \cite[12.9.1]{lozier2003nist}, when $z\uparrow +\infty$ (\emph{i.e.} $t\downarrow 0$) we have
\begin{align*}
D_{-a-\frac{1}{2}}(z)\sim &e^{-\frac{1}{4}z^2}z^{-a-\frac{1}{2}}\left(1+O(\frac{1}{z^2})\right).
\end{align*}
Therefore 
\begin{align*}
\frac{1}{\sqrt{2\pi}}\sum_{n=2}^{2N-1}h_nt^{\frac{n}{2}-1}e^{-\frac{x^2}{4t}}D_{-n+1}\left(\frac{x}{\sqrt{t}}\right)\sim&\sum_{n=2}^{2N-1} t^{n-\frac{3}{2}}e^{-\frac{x^2}{2t}} \left(1+O(t)\right)\\
\sim& p_\tau^{(0)}(t) (t^2+o(t^2)).
\end{align*}
This proves the left tail result.

Now we consider the right tail. Again by \eqref{new27} we immediately have
\begin{equation}\label{eqn:firstorderdiverge}
\left(h_0+\frac{h_1}{x}t\right)p^{(0)}_\tau(t)\sim \frac{h_1}{x}tp^{(0)}_\tau(t)\sim t^{-\frac{1}{2}},\ \text{for }t\uparrow +\infty.
\end{equation}
For the series of $D$-functions we use \cite[19.12.3]{abramowitz1964handbook}, and 
\[
D_{-a-\frac{1}{2}}(z)=\sqrt{\pi}2^{-\frac{1}{2}a}e^{-\frac{1}{4}z^2}\left(\frac{2^{-\frac{1}{4}}}{\Gamma\left(\frac{3}{4}+\frac{1}{2}a\right)}M\left(\frac{1}{2}a+\frac{1}{4},\frac{1}{2},\frac{1}{2}z^2\right)-\frac{2^{\frac{1}{4}}z}{\Gamma\left(\frac{1}{4}+\frac{1}{2}a\right)}M\left(\frac{1}{2}a+\frac{3}{4},\frac{3}{2},\frac{1}{2}z^2\right)\right),
\]
where $M(\cdot,\cdot,\cdot)$ is the Kummer's function (confluent hypergeometric function $\ _1 F_1(\cdot,\cdot,\cdot)$). Introduce the notations 
\[
\zeta(a,s):= \sqrt{\pi}\frac{2^{-\frac{1}{4}-\frac{1}{2}a}}{\Gamma\left(\frac{3}{4}+\frac{1}{2}a\right)}\frac{(\frac{1}{2}a+\frac{1}{4})_s}{(\frac{1}{2})_s2^ss!},
\] 
and
\[
\xi(a,s):= -\sqrt{\pi}\frac{2^{\frac{1}{4}-\frac{1}{2}a}}{\Gamma\left(\frac{1}{4}+\frac{1}{2}a\right)}\frac{(\frac{1}{2}a+\frac{3}{4})_s}{(\frac{3}{2})_s2^ss!},
\] 
where $(\cdot)_s:=\Pi_{k=0}^{s-1}(\cdot + k)$. According to \cite[13.2.2]{lozier2003nist}, we can reformulate the $D$-function as
\[
D_{-n+1}\left(\frac{x}{\sqrt{t}}\right)=e^{-\frac{x}{4t}}\left(\sum_{s=0}^\infty\zeta(n-\frac{3}{2},s)\frac{x^{2s}}{t^s}+\sum_{s=0}^\infty\xi(n-\frac{3}{2},s)\frac{x^{2s+1}}{t^{s+\frac{1}{2}}}\right).
\]
Now let $t\uparrow +\infty$, it then yields
\begin{align*}
t^{\frac{n}{2}-1}e^{-\frac{x^2}{4t}}D_{-n+1}\sim &t^{\frac{n}{2}-1}\left(\sum_{s=0}^\infty\zeta(n-\frac{3}{2},s)\frac{x^{2s}}{t^s}+\sum_{s=0}^\infty\xi(n-\frac{3}{2},s)\frac{x^{2s+1}}{t^{s+\frac{1}{2}}}\right).
\end{align*}
Note that for fixed $n$ the leading term in above is $t^{\frac{n}{2}-1}$. Therefore, by considering the highest order $n=2N-1$ in the $D$-function series, we get
\begin{equation}\label{eqn:secondorderdiverge}
\frac{1}{\sqrt{2\pi}}\sum_{n=2}^{2N-1}h_nt^{\frac{n}{2}-1}e^{-\frac{x^2}{4t}}D_{-n+1}\left(\frac{x}{\sqrt{t}}\right)\sim t^{N-\frac{3}{2}}.
\end{equation}
This concludes our proof.
\end{proof}
\\

From the right tail asymptotic we find for $N\ge 2$, the perturbed FPTD would diverge at $t=+\infty$. In the case $N=1$, though $p_\tau^{(1)}(t)$ converges to $0$, due to the fat-tail effect of $t^{-\frac{1}{2}}$ we still expect an infinite integral. Therefore the total probability of perturbed FPTD is infinity for all $N\ge 1$. Indeed, the limit value of LT tells for all $N\ge 1$,
\[
\left|\int_0^\infty p_\tau^{(N)}(t)dt\right|=\lim_{\beta\downarrow 0^{+}}\left|f^N(\beta)\right|=\infty.
\]

On the other hand, the left tail asymptotic shows equivalence between OU and Brownian motion FPTDs. Opposite to the spectral decomposition \cite{linetsky2004computing, alili2005representations}, our analysis indicates the perturbation may not work well for large $t$, but it provides smooth densities for small $t$.

\begin{lem}[$\eta$-Function for OU Process]\label{lemma410}
For $N\ge 1$, the $\eta$-function is given by
\begin{equation}\label{enq:etaforOU}
\eta(x,t)=\frac{e^{-\frac{x^2}{4t}}}{\sqrt{2\pi}}\sum_{n=0}^{2N-1}t^{\frac{n}{2}-1}\left((\partial_x l_n)D_{-n+1}\left(\frac{x}{\sqrt{t}}\right)-\frac{l_n}{\sqrt{ t}}D_{-n+2}\left(\frac{x}{\sqrt{t}}\right)\right),
\end{equation}
where
\begin{equation}\label{defLn}
l_n=\sum_{\substack{j,k;\\j+k=2N-n}} c_k^{(N,j)}\theta^k\cdot x^j.
\end{equation}
\end{lem}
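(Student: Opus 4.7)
The plan is to convert $f_N(x,\beta)$ into a form to which the inverse Laplace transform \eqref{DinverseTransform} used in the proof of Proposition \ref{prop402} can be applied directly, then differentiate in $x$ before inverting. First I would substitute the decomposition in Lemma \ref{lemma2} into $f_N=f_0 g_N$ from Proposition \ref{prop:prop1}. After collecting powers of $\gamma$, each term $c_k^{(N,j)}(1/\gamma)^{2N-k-j}\theta^k x^j$ contributes to the coefficient of $\gamma^{-n}$ exactly when $j+k = 2N - n$, which is precisely the index set defining $l_n$ in \eqref{defLn}. Consequently $f_N$ admits the compact representation
\[
f_N(x,\beta) \;=\; e^{-\gamma x}\sum_{n=0}^{2N-1} l_n(x)\,\gamma^{-n},
\]
with $l_n(x)$ a polynomial in $x$ independent of $\beta$. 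The index range $0\le n\le 2N-1$ follows because $j\ge 1$ forces $j+k\le 2N$, and $k\le N$ combined with $j\le 2N$ keeps the sum nonempty throughout this range.

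Next I would differentiate in $x$, which splits into two contributions: the derivative hitting $l_n$ preserves the exponent of $\gamma$, whereas the derivative hitting $e^{-\gamma x}$ produces a factor of $-\gamma$ and shifts the exponent from $-n$ to $-n+1$. This yields
\[
\partial_x f_N(x,\beta) \;=\; e^{-\gamma x}\sum_{n=0}^{2N-1} (\partial_x l_n)(x)\,\gamma^{-n} \;-\; e^{-\gamma x}\sum_{n=0}^{2N-1} l_n(x)\,\gamma^{-(n-1)}.
\]
The first sum falls in the scope of \eqref{DinverseTransform} verbatim, giving $\tfrac{t^{n/2-1}}{\sqrt{2\pi}}e^{-x^2/4t}D_{-n+1}(x/\sqrt{t})$ for each $n$. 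For the second sum, the shifted exponent $\gamma^{-(n-1)}$ corresponds to $2^{-(n-1)/2}\beta^{-(n-1)/2}$, and applying \eqref{DinverseTransform} (with $n$ replaced by $n-1$) produces $\tfrac{t^{(n-3)/2}}{\sqrt{2\pi}}e^{-x^2/4t}D_{-n+2}(x/\sqrt{t})$, which I would rewrite as $t^{n/2-1}\cdot t^{-1/2}$ times the appropriate exponential and parabolic cylinder factor. Assembling the two sums with the common prefactor $\frac{e^{-x^2/4t}}{\sqrt{2\pi}}$ produces the stated formula for $\eta(x,t)$.

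The only delicate point is the boundary case $n=0$ of the second sum, which requires inverting $e^{-\gamma x}\gamma$, i.e.\ a formally "negative-index" instance of \eqref{DinverseTransform}. I would handle this either by verifying the formula directly using $D_2(z)=(z^2-1)e^{-z^2/4}$ against the classical identity $\mathcal{L}^{-1}\{\sqrt{2\beta}\,e^{-\sqrt{2\beta}x}\}(t) = -\partial_x\!\left[\FPTBM\right]$, or by noting that $-\gamma e^{-\gamma x} = \partial_x e^{-\gamma x}$ and interchanging $\partial_x$ with $\mathcal{L}^{-1}$, which is justified because $l_n$ is a polynomial independent of $\beta$ and the resulting integrand decays sufficiently for $x>0$. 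The rest of the argument is a careful bookkeeping of indices and powers of $t$, with no further analytic obstacle.
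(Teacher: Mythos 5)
Your proof is correct and follows essentially the same route as the paper: write $f_N(x,\beta)=e^{-\gamma x}\sum_{n=0}^{2N-1}l_n\,\gamma^{-n}$ using Lemma~\ref{lemma2}, differentiate in $x$ so that each term splits into a $\gamma^{-n}$ piece and a shifted $\gamma^{-(n-1)}$ piece, and then invert term by term via \eqref{DinverseTransform}. The paper's own proof is just these two steps stated tersely; you add a useful observation that the paper leaves implicit, namely that the $n=0$ contribution of the second sum requires extending \eqref{DinverseTransform} to the index $-1$ (i.e.\ inverting $\sqrt{2\beta}\,e^{-\sqrt{2\beta}x}$), which you justify correctly both via $D_2(z)=(z^2-1)e^{-z^2/4}$ and via differentiating the classical inverse Gaussian transform — the same identity the paper itself invokes in the Bessel case, Proposition~\ref{prop:prop46}.
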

\begin{proof}
We start with the definition of $\eta$-function (proposition \ref{prop21}). The partial derivative of $f_N(x,\beta)$ is given by
\[
\partial_xf_N(x,\beta)=e^{-\sqrt{2\beta} x}\sum_{n=0}^{2N-1}2^{-\frac{n}{2}} \partial_x(l_n)\beta^{-\frac{n}{2}}-2^{-\frac{n-1}{2}} l_n\beta^{-\frac{n-1}{2}}.
\]
The rest of proof is concluded by using \eqref{DinverseTransform} again.
\end{proof}\\

\begin{prop}[First Order Error Estimation and Convergence of OU Process]\label{firorderOU}
For $N=1$ and all $t\in [0,+\infty)$, the error estimation \eqref{eqn:errorestimateqfunc} of perturbed OU FPTD is valid. Moreover, for any $T>0$ and $t\in[0,T]$, the perturbation is $o(\epsilon^2)$-accurate.
\end{prop}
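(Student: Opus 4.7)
The strategy is to invoke Proposition~\ref{prop21} at $N = 1$, so that the task reduces to verifying (i) the integrability condition \eqref{eqn:eqnboundnesscondition} and (ii) the uniform bound \eqref{eqn:assumpboundnedness} in the form $M = M(T) < \infty$ for $t \in [0, T]$. Condition (i) will deliver the probabilistic representation \eqref{eqn:errorestimateqfunc}, while (ii) will yield $|q_{\tau}(t)| \le \epsilon^{2} M(T)$ via \eqref{eqn:errorestimateboundedness}, which is precisely the advertised second-order accuracy.

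The first step is to make $\eta(x, s)$ completely explicit. For $N = 1$ the sum in Lemma~\ref{lemma410} only involves $D_{0}, D_{1}, D_{2}$, each of which is elementary via \eqref{eqn:D0}, \eqref{eqn:D1} and the parabolic-cylinder recurrence $D_{2}(z) = (z^{2} - 1) e^{-z^{2}/4}$. Reading off $l_{0} = \tfrac{1}{2} x^{2} - \theta x$ and $l_{1} = \tfrac{1}{2} x$ directly from $f_{1}$ (equivalently, from the $c^{(1, j)}_{k}$-coefficients in the appendix), one obtains
\[
\eta(x, s) = \frac{e^{-x^{2}/(2s)}}{\sqrt{2\pi}} \left[ -\frac{x^{4}}{2 s^{5/2}} + \frac{\theta x^{3}}{s^{5/2}} + \frac{x^{2}}{s^{3/2}} - \frac{2 \theta x}{s^{3/2}} + \frac{1}{2 \sqrt{s}} \right],
\]
so that $h(x) \eta(x, s) = (\theta - x) \eta(x, s)$ is a finite sum of monomials $x^{j} s^{-k/2} e^{-x^{2}/(2s)}$ with $0 \le j \le 5$ and $k \in \{1, 3, 5\}$.

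The pathwise identity $z^{j} e^{-z^{2}/2} \le C_{j}$ with $z = X_{u} / \sqrt{t - u}$ turns each such monomial into $C_{j} s^{(j - k)/2}$. Terms with $j \ge k - 1$ are integrable in $s$ near $0$; the only non-integrable ones are the $\theta^{2}$-contributions $2 \theta^{2} x s^{-3/2} e^{-x^{2}/(2s)}$ and $\theta^{2} x^{3} s^{-5/2} e^{-x^{2}/(2s)}$, which are $O(s^{-1})$ pathwise. To handle these I would take the expectation before integrating: by Fubini--Tonelli,
\[
\mathbb{E}_{x} \! \left[ \int_{0}^{\tau \wedge t} \left| h(X_{u}) \eta(X_{u}, t - u) \right| du \right] = \int_{0}^{t} \int_{0}^{\infty} \left| h(y) \eta(y, t - u) \right| p_{u}^{*}(x, y) \, dy \, du,
\]
where $p_{u}^{*}(x, y)$ denotes the sub-probability density of $X_{u}$ on $\{\tau > u\}$. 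A standard Girsanov comparison with killed Brownian motion yields $p_{u}^{*}(x, y) \le K(u, x) \, y$ as $y \downarrow 0$, and this linear vanishing at the boundary cancels exactly the residual $1/y$ hidden in the singular summands. Rescaling $y = z \sqrt{s}$ inside the inner integral then shows that every contribution is bounded by $C(x, \theta, T) s^{\alpha}$ for some $\alpha \ge 0$, hence integrable on $s \in [0, T]$ and uniformly bounded in $t \in [0, T]$, producing the finite $M(T)$.

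With such an $M(T)$ in hand, \eqref{eqn:eqnboundnesscondition} follows at once from the polynomial-in-$T$ growth of $M(T)$ together with the exponential decay $e^{-\beta t}$ for $\mathrm{Re}(\beta) > 0$, so that Proposition~\ref{prop21} applies and \eqref{eqn:errorestimateboundedness} delivers the bound $|q_{\tau}(t)| \le \epsilon^{2} M(T)$. The main technical obstacle is the near-boundary estimate in the previous paragraph: the purely pathwise control of $|h \eta|$ is not integrable in $s$, and one genuinely needs the boundary vanishing of the killed transition density to absorb the $s^{-1}$ singularities produced by the $\theta^{2}$-terms. Once this ingredient is secured, the remaining steps are routine bookkeeping.
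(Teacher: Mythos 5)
Your plan — invoke Proposition~\ref{prop21} at $N=1$ and verify \eqref{eqn:eqnboundnesscondition} and \eqref{eqn:assumpboundnedness} — is the same as the paper's, and your explicit $\eta$ and the identification $l_0=\tfrac12 x^2-\theta x$, $l_1=\tfrac12 x$ agree with Lemma~\ref{lemma410} and Corollary~\ref{cor2}. But the verification itself takes a genuinely different route. The paper splits according to the size of the process value: for $\{X_u\le K\}$ it simply asserts a constant bound $M_1$ on $|h\eta|$ over $[0,K]\times[0,\infty)$ (continuity plus a footnote handling $(0,0)$), while for $\{X_u>K\}$ it uses the appendix Lemma~\ref{firorderOUlemma}, where $x>K$ means the Gaussian factor $e^{-x^2/(2s)}$ controls the $(t-u)^{-3/2}$ and $(t-u)^{-5/2}$ monomials via an inverse-Gamma comparison. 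You instead split according to the monomials in $h\eta$, bound pathwise by $z^j e^{-z^2/2}\le C_j$ uniformly in $x$, and isolate precisely the two $\theta^2$-monomials as the obstruction. Your observation here is in fact sharper than the paper's: along $x=\sqrt{s}$ one has
$h(x)\eta(x,s)=\frac{e^{-1/2}}{\sqrt{2\pi}}\left(-\frac{\theta^2}{s}+\frac{2\theta}{\sqrt s}-1\right)$,
which diverges as $s\downarrow 0$ whenever $\theta\ne 0$, so the paper's claimed uniform bound $M_1$ on $[0,K]\times[0,\infty)$ appears not to hold. The $\theta^2$-terms you flag are exactly what breaks it, and some additional ingredient (your Fubini plus boundary-vanishing of the killed density, or a splitting of the $u$-integral in the spirit of Lemma~\ref{firorderOUlemma}) is genuinely required in the $\{X_u\le K\}$ case.

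That said, the key ingredient in your fix is still a sketch. You should make explicit: (i) the claimed comparison $p^*_u(x,y)\le K(u,x)\,y$ near $y=0$ for the killed OU density and the behavior of $K(u,x)$ as $u$ ranges over $[0,t]$ (for fixed $x>0$, $K(u,x)$ stays bounded as $u\downarrow 0$ because of the $e^{-x^2/(2u)}$ factor, but this must be said, since the naive $u^{-3/2}$ prefactor looks dangerous); (ii) the large-$y$ regime, where the linear-in-$y$ bound is no longer appropriate and the Gaussian in $\eta$ must do the work; and (iii) that the resulting $M(T)$ is sub-exponential, which is what actually delivers the Fubini condition \eqref{eqn:eqnboundnesscondition} for $\mathrm{Re}(\beta)>0$ — you assert this but do not derive it. One wording issue: there is no literal ``residual $1/y$'' in the singular monomials $\theta^2 y\,s^{-3/2}e^{-y^2/(2s)}$ and $\theta^2 y^3 s^{-5/2}e^{-y^2/(2s)}$; what the extra factor $y$ from the killed density buys you, after the substitution $y=z\sqrt{s}$, is one more half-power of $s$, turning the $s^{-1}$ into $O(1)$. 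With these points written out the argument closes, and it would also repair the step in the paper's own proof that your computation shows is too casual.
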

\begin{proof}
Recall $h$-function in \eqref{hfunctionforOU} and $\eta$-function in \eqref{enq:etaforOU}. In the first part of the proof, we show \eqref{eqn:eqnboundnesscondition} in proposition \ref{prop21} is satisfied. As a sufficient condition to \eqref{eqn:eqnboundnesscondition}, we need to find a bound
\[
\mathbb{E}_x\left[\int_0^{\tau\wedge t}|h(X_u)\eta(X_u,t-u)|du\right]\le M(t)
\]
for all $t\in [0,\infty)$, and such that $M(t)$ grows less than exponential.

For all fixed $x\in\mathcal{D}$, one can check that $h(x)\eta(x,t)\rightarrow 0$ as $t\uparrow +\infty$. Let $K>1$ be a large and fixed constant. $h(x)\eta(x,t)$ is continuous, so there exists a constant $M_1$, \emph{w.l.o.g.}\footnote{Note that, $h(0)\eta(0,t)$ and $h(x)\eta(x,0)$ are both well-defined for $t$ and $x$ on open intervals. The only singularity is generated by $(x,t)=(0,0)$. However, under the probability space, $X_u=0$ and $t-u=0$ only happens with $t=\tau$, while $P_x(\tau=t)=0$. Therefore, we can define $h(0)\eta(0,0)=0$.}, for all $(x,t)\in [0,K]\times [0,+\infty)$, such that
\begin{equation}\label{smallerbound}
\left|h(x)\eta(x,t)\bold{1}_{\set{x\le K}}\right|\le M_1\bold{1}_{\set{x\le K}}.
\end{equation}
Note that $\mathbb{P}_x(X_u\le K)\le 1$, therefore \eqref{smallerbound} yields
\begin{align}
\mathbb{E}_x\left[\int_0^{\tau\wedge t}|h(X_u)\eta(X_u,t-u)|\bold{1}_{\set{X_u\le K}}du\right]&\le \left[\int_0^{t} M_1\mathbb{E}_x\left[\bold{1}_{\set{X_u\le K}}\right]du\right]\label{firstpartbound111_1}\\
&\le M_1\cdot t\label{firstpartbound111_2}.
\end{align}
On the other hand, for $x>K$, by lemma \ref{firorderOUlemma} in the appendix, we have
\begin{equation}\label{secondpartbound111}
\mathbb{E}_x\left[\int_0^{\tau\wedge t}|h(X_u)\eta(X_u,t-u)|\bold{1}_{\set{X_u>K}}du\right]\le C_0+C_1\sqrt{t}+C_2 t.
\end{equation}
Combine \eqref{firstpartbound111_2} and \eqref{secondpartbound111}, we get
\[
\mathbb{E}_x\left[\int_0^{\tau\wedge t}|h(X_u)\eta(X_u,t-u)|du\right]\le C_0+C_1\sqrt{t}+(C_2+M_1) t.
\]
This concludes the first part proof. The second part of the proof is given by showing, for $t\in [0,T]$,
\[
\left|\mathbb{E}_x\left[\int_0^{\tau\wedge t}h(X_u)\eta(X_u,t-u)du\right]\right|\le C_0+C_1\sqrt{T}+(C_2+M_1) T.
\]
\end{proof}
\\

\section{Bessel Process}\label{sec5} 
Bessel process was introduced in \cite{mckean1960bessel} as the norm of an $n$-dimensional Brownian motion. Denoted by $BES(n)$ (sometimes by $BES(\nu)$ with $\nu = \frac{n-2}{2}$), Bessel process has been discussed extensively in \cite[Chapter XI]{revuz2013continuous}. In mathematical finance, the family of Bessel processes is closely\footnote{Squared Bessel process appears in the Constant Elasticity of Variance (CEV) model and the Cox-Ingersoll-Ross(CIR) process (or Heston model).} related to models in short rates and stochastic volatilities \cite{carr2006jump, cox2005theory, chen1992pricing, heston1993closed, dassios2018azema}. 

In this section, we consider the class of Bessel processes with orders $n=1+\epsilon$, where $-1<\epsilon<1$. For $BES(1+\epsilon)$, the $h$-function is specified by 
\begin{equation}\label{hufnctionforBessel}
h(x)=\frac{1}{2x}.
\end{equation}
The domain of $BES(1+\epsilon)$ is $\mathcal{D}=[0,+\infty)$. According to \cite{ito2012diffusion, kent1978some}, $\set{\infty}$ is a natural boundary for all $\epsilon\in\mathcal{R}$; for $-1<\epsilon<1$ ($0<n<2$), $\set{0}$ is a regular boundary with both types of exit and entrance. We assume the process makes instantaneous reflection at $0$. Refer to \cite{kent1978some}, the infinitesimal generator then is well defined for all $C^2$-functions on $\mathcal{D}$. Denote it by
\[ 
\mathcal{A} f(x)=\frac{\epsilon}{2x}\frac{\partial f}{\partial x}+\frac{1}{2}\frac{\partial ^2 f}{\partial x^2}.
\]
We consider a general hitting level $0<a<X_0$ rather than $a=0$. By suppressing $a$ we redefine
\[
\tau=\inf\left\{t>0:X_t=a\bigg| X_0=x\right\}.
\]
According to \cite{revuz2013continuous}, $BES(1+\epsilon)$ is recurrent on $\mathcal{D}$ for all $\epsilon\in (-1,1)$. This guarantees $\tau$ is finite \emph{a.s.} Therefore the corresponding BVP exists. The BVP boundary in \eqref{bv0} is specified by $f(a)=1$. And refer to \cite{kent1978some}, the initial LT is given by the ratio of modified Bessel functions of the second kind. Similar as in the OU process, the initial LT can be solved numerically via \cite{abate2006unified} or using spectral decompositions \cite{linetsky2004computing}. We will repeat our previous work and solve the first order downward FPTD via perturbation.

\begin{lem}[First Order BVP Solution]\label{lemma51}
The first order perturbed LT is given by
\[
f_1(x,\beta)=f_0(x,\beta)\frac{\ln( \frac{a}{x})+e^{2\gamma a}E_1(2\gamma a)-e^{2\gamma x}E_1(2\gamma x)}{2},
\]
where $E_1(\cdot)$ is the \emph{exponential integral} function and 
\[
f_0(x,\beta)=e^{-\gamma (x-a)}.
\]
\end{lem}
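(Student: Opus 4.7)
The plan is to apply the recursive framework of Lemma \ref{lemma32}, adapted to the hitting level $a>0$ rather than $0$. Since the whole derivation of Lemma \ref{lemma32} rests on solving $\tfrac12 g_i''-\gamma g_i'=h\,k_i$ with Dirichlet data at the barrier and at infinity, the only change needed is to take the lower limit of every integral to be $a$ (so that $g_i(a)=0$ is built in) and to redo the determination of the constant of integration from the vanishing condition at $+\infty$. Writing $f_1=f_0 g_1$ with $f_0(x,\beta)=e^{-\gamma(x-a)}$ reduces the problem to solving for the scalar function $g_1$.

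Next I would compute the driving data. Since $g_0\equiv 1$, equation \eqref{eqnK} gives $k_1=\gamma g_0-g_0'=\gamma$, and with $h(z)=1/(2z)$ the ODE becomes
\[
\tfrac12 g_1''-\gamma g_1'=\frac{\gamma}{2x}.
\]
Multiplying by $2e^{-2\gamma x}$ makes the left-hand side an exact derivative and produces
\[
g_1=\int_a^x 2e^{2\gamma y}\left(\int_a^y \frac{\gamma}{2z} e^{-2\gamma z}dz\right)dy+C_1\int_a^x e^{2\gamma y}dy.
\]
The inner integral is where the exponential integral surfaces: after the substitution $t=2\gamma z$ and using $E_1(u)=\int_u^\infty e^{-t}/t\,dt$, one obtains $\int_a^y e^{-2\gamma z}/z\,dz=E_1(2\gamma a)-E_1(2\gamma y)$. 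The outer integral against $e^{2\gamma y}$ is handled by one integration by parts, noting $E_1'(u)=-e^{-u}/u$, which produces the terms $e^{2\gamma x}E_1(2\gamma x)$, $e^{2\gamma a}E_1(2\gamma a)$ and $\ln(x/a)$.

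Finally I would pin down $C_1$ from the condition $f_1(x,\beta)\to 0$ as $x\uparrow+\infty$, equivalently $e^{-\gamma x}g_1(x)\to 0$. Using the standard asymptotic $E_1(u)\sim e^{-u}/u$ as $u\to\infty$, the term $e^{\gamma x}E_1(2\gamma x)$ decays, so the only obstruction to vanishing is the coefficient of $e^{2\gamma x}$ inside $g_1$. Setting this coefficient to zero yields $C_1=-\gamma E_1(2\gamma a)$; substituting back collapses the expression to exactly
\[
g_1=\tfrac12\bigl(\ln(a/x)+e^{2\gamma a}E_1(2\gamma a)-e^{2\gamma x}E_1(2\gamma x)\bigr),
\]
which is the claimed formula. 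The main obstacle is not the algebra but the asymptotic bookkeeping at $+\infty$: unlike the OU case where everything grows polynomially inside an exponential, here one must match a logarithmic term with the non-elementary $E_1$ and use its large-argument expansion to justify that the remaining pieces genuinely decay, so that only the $e^{2\gamma x}$ coefficient dictates $C_1$.
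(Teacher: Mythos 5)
Your proof is correct and follows essentially the same route as the paper: write $f_1=f_0 g_1$, reduce to the ODE $\tfrac12 g_1''-\gamma g_1'=\gamma/(2x)$, recognize the exponential integral, and fix the free constant by the vanishing condition at $+\infty$ using $E_1(u)\sim e^{-u}/u$. The only (cosmetic) difference is your choice of integration limits: you take both lower limits at $a$, so $g_1(a)=0$ holds automatically and only one constant remains, whereas the paper takes the inner lower limit at $+\infty$ and the outer at $0$, then determines two constants $C_1,C_2$ from the two boundary conditions — both bookkeeping schemes lead to the same answer.
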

\begin{proof}
Follow our recursion framework. Denote the LT of Brownian motion FPTD by $f_0(x,\beta)=e^{-\gamma (x-a)}$ \cite{borodin2012handbook}. Recall \eqref{eqnK}, $k_1$ is specified by $k_1=\gamma$. Follow \eqref{eqngeneralsolutiongi}, by assigning different lower integral-limits we have
\begin{equation}\label{eqn:interiorintegral}
g_1=\int_{0}^x 2e^{2\gamma y} \left[\gamma\int_{+\infty}^y \frac{1}{2z}e^{-2\gamma z}dz + C_1\right]dy +C_2.
\end{equation}
After standard calculations, the inner integral yields
\[
\frac{\gamma}{2}\int_{+\infty}^y \frac{1}{z}e^{-2\gamma z}dz=-\frac{\gamma}{2}E_1(2\gamma y).
\]
Simplifying \eqref{eqn:interiorintegral} we get
\begin{equation}\label{eqn:eqnabove11}
g_1=\int_{0}^x -\gamma e^{2\gamma y} E_1(2\gamma y)dy+ C_1\frac{e^{2\gamma x}-1}{\gamma} +C_2.
\end{equation}
Refer to \cite[Equation (5), 4.2.]{geller1969table}, for the integral in \eqref{eqn:eqnabove11} we have
\[
\int_{0}^x -\gamma e^{2\gamma y} E_1(2\gamma y)dy=-\frac{\ln( x)+e^{2\gamma x}E_1(2\gamma x)}{2}+C_3.
\]
Combine $C_3$ and $C_2$, \eqref{eqn:interiorintegral} then becomes
\[
g_1=-\frac{\ln( x)+e^{2\gamma x}E_1(2\gamma x)}{2}+C_1\frac{e^{2\gamma x}-1}{\gamma}+C_2.
\]

$C_1$ and $C_2$ are determined through boundary conditions in the $o(\epsilon^1)$-problem. We start with the upper boundary $\set{+\infty}$. By L'H\^opital's rule $f_0(x)\ln(x)\rightarrow 0$ as $x\uparrow +\infty$. And refer to \cite[Equation (5), 3.3]{geller1969table}, for the $E_1$-term we get
\[
\lim_{x\uparrow +\infty}f_0(x)e^{2\gamma x}E_1(2\gamma x)=0.
\]
Therefore, the only $C_1$ which satisfies the condition $f_1(+\infty)=0$ is $0$. Similarly, solving $f_1(a)=0$ gives 
\[
C_2=\frac{\ln(a)+e^{2\gamma a}E_1(2\gamma a)}{2}.
\]
We conclude the proof by here.
\end{proof}
\\

\begin{cor}[Full Integrability]\label{cor53}
The first order perturbed FPTD of $BES(1+\epsilon)$ is fully integrable.
\end{cor}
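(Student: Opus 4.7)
The plan is to invoke the final-value relation for Laplace transforms, which gives
\[
\int_0^\infty p^{(1)}_\tau(t)\,dt = \lim_{\beta \downarrow 0^+} f^1(x,\beta),
\]
whenever the right-hand limit exists, and then to show that this limit is finite (in fact equal to $1$) using the explicit expression from lemma \ref{lemma51}. Splitting $f^1 = f_0 + \epsilon f_1$, the $o(1)$ piece is immediate: $f_0(x,\beta) = e^{-\gamma(x-a)} \to 1$ as $\gamma := \sqrt{2\beta} \downarrow 0$. So everything reduces to showing $f_1(x,\beta) \to 0$.

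By lemma \ref{lemma51}, up to the vanishing-at-$\gamma=0$ prefactor $f_0$, this amounts to analysing the bracket
\[
B(\gamma) := \ln(a/x) + e^{2\gamma a}E_1(2\gamma a) - e^{2\gamma x}E_1(2\gamma x)
\]
as $\gamma \downarrow 0$. The key ingredient is the small-argument expansion $E_1(z) = -C - \ln(z) + O(z)$, where $C$ denotes the Euler--Mascheroni constant. Substituting $z = 2\gamma a$ and $z = 2\gamma x$ gives $e^{2\gamma a}E_1(2\gamma a) = -\ln(2\gamma) - \ln a - C + O(\gamma \ln \gamma)$ and the analogous expression with $a$ replaced by $x$. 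The singular $\ln(2\gamma)$ contributions and the Euler constants cancel in the difference, leaving $\ln(x/a) + O(\gamma \ln \gamma)$. Combined with the external $\ln(a/x)$, this yields $B(\gamma) = O(\gamma \ln \gamma) \to 0$, hence $f^1(x,\beta) \to 1$, which is precisely the integrability statement.

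The main subtlety — and probably the only real obstacle — is justifying passage from the finite limit $\lim_{\beta \downarrow 0^+} f^1(x,\beta)$ to the finiteness of $\int_0^\infty p^{(1)}_\tau(t)\,dt$. Since $f^1(x,\cdot)$ is analytic on $\{Real(\beta) > 0\}$ and the argument above shows it extends continuously to $\beta = 0$ with value $1$, an Abelian/Tauberian argument applied to the antiderivative $F(t) := \int_0^t p^{(1)}_\tau(s)\,ds$ suffices to conclude $F(\infty) = 1$. Alternatively, one can invert $f^1$ termwise in closed form (the $E_1$-terms invert cleanly against $f_0$-type Brownian kernels) and check integrability on the time side directly. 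Either route closes the argument; what is essential, and what sharply distinguishes this case from the OU tail divergence of proposition \ref{prop:tailasmptotocsou}, is the exact cancellation in $B(\gamma)$.
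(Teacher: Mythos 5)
Your approach is essentially the paper's: both reduce the claim to computing $\lim_{\beta\downarrow 0^+}f^1(x,\beta)=1$, observe $f_0\to 1$ trivially, and kill the remaining bracket (your $B(\gamma)$, the paper's $2g_1$) via the small-argument expansion $E_1(z)=-c-\ln z+O(z)$, with the logarithmic singularities cancelling against $\ln(a/x)$. You are slightly more explicit — you spell out the cancellation of $\ln(2\gamma)$ and the Euler constants, and you flag the Abelian step linking $\lim_{\beta\to 0^+}f^1$ to $\int_0^\infty p_\tau^{(1)}(t)\,dt$, which the paper treats as implicit — but the route is the same.
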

\begin{proof}
The corollary is proved by showing
\[
\lim_{\beta\downarrow 0^{+}}f^1(x,\beta)=1.
\]
Since $\lim_{\beta\downarrow 0^{+}}f_0(x,\beta)=1$, it is sufficient only to justify
\[
\lim_{\beta\downarrow 0^{+}}g_1(x,\beta)=0.
\]
Indeed, by considering \cite[Equation (1), 3.3]{geller1969table} that
\begin{equation*}\label{eqnE1expansion}
E_1(x)=-c-\ln(x)+\int_0^x \frac{1-e^{-u}}{u}du,
\end{equation*}
where $c=0.5772$ is the Euler's constant, one can demonstrate the result immediately.
\end{proof}
\\

\begin{prop}[First Order Perturbed FPTD of $BES(1+\epsilon)$]\label{prop:54}
The first order perturbed downward FPTD of $BES(1+\epsilon)$ is given by
\begin{equation}\label{secondtermreference}
p_\tau^{(1)}(t)=\left(1+ \frac{\epsilon}{2}\ln( \frac{a}{x})\right)p_{\tau,x,a}^{(0)}(t)+\frac{\epsilon(x-a)}{\sqrt{2\pi \cdot t}}\int_{(x-a)^2}^\infty  \frac{p_\Gamma(y,1,\frac{1}{2t})dy}{(\sqrt{y}-x+3a)(\sqrt{y}+x+a)}dy,
\end{equation}
where
\begin{equation}\label{eqn:new0def}
p_{\tau,x,a}^{(0)}(t):=\frac{x-a}{\sqrt{2\pi}}t^{-\frac{3}{2}}e^{-\frac{(x-a)^2}{2t}}
\end{equation}
and
\begin{equation}\label{eqnGfunctioncomment}
p_\Gamma(y,1,\frac{1}{2t}):=\frac{1}{2t}e^{-\frac{y}{2t}}
\end{equation}
are density functions of inverse Gamma and Gamma distributions, respectively.
\end{prop}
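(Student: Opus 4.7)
The plan is to invert the first-order Laplace transform $f^{1}=f_{0}+\epsilon f_{1}$ from Lemma~\ref{lemma51} term by term. Regrouping,
\[
f^{1}(x,\beta)=\left(1+\tfrac{\epsilon}{2}\ln\tfrac{a}{x}\right)f_{0}(x,\beta)+\tfrac{\epsilon}{2}\,f_{0}(x,\beta)\left[e^{2\gamma a}E_{1}(2\gamma a)-e^{2\gamma x}E_{1}(2\gamma x)\right].
\]
Because $1+\tfrac{\epsilon}{2}\ln(a/x)$ is independent of $\beta$, the first summand inverts immediately via the standard identity $\ILT\{e^{-\gamma w}\}(t)=\frac{w}{\sqrt{2\pi}}t^{-3/2}e^{-w^{2}/(2t)}$ with $w=x-a>0$, producing $(1+\tfrac{\epsilon}{2}\ln(a/x))\,p_{\tau,x,a}^{(0)}(t)$. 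Only the $E_{1}$-combination still has to be inverted.

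For that bracket I would introduce the representation
\[
e^{2\gamma z}E_{1}(2\gamma z)=\int_{0}^{\infty}\frac{e^{-2\gamma s}}{s+z}\,ds,\qquad z>0,\ \mathrm{Re}(\gamma)>0,
\]
obtained by substituting $v=2\gamma(s+z)$ in $E_{1}(u)=\int_{u}^{\infty}e^{-v}/v\,dv$. Subtracting the $z=x$ and $z=a$ instances telescopes them into a single integral:
\[
e^{2\gamma a}E_{1}(2\gamma a)-e^{2\gamma x}E_{1}(2\gamma x)=(x-a)\int_{0}^{\infty}\frac{e^{-2\gamma s}}{(s+a)(s+x)}\,ds,
\]
which already exposes the $(x-a)$ prefactor of the stated formula. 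Multiplying by $f_{0}=e^{-\gamma(x-a)}$ turns the kernel inside the integral into $e^{-\gamma(x-a+2s)}$, which is directly invertible in $\beta$.

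Next, Fubini--Tonelli (legal by nonnegativity of the integrand for real $\beta>0$ and uniform-in-$s$ exponential decay of $e^{-\gamma(x-a+2s)}$) swaps the $s$-integration with $\ILT$, leaving inside the integral the inverse-Gaussian kernel $\frac{x-a+2s}{\sqrt{2\pi}}t^{-3/2}e^{-(x-a+2s)^{2}/(2t)}$. A change of variable $y=(x-a+2s)^{2}$ then yields
\[
(s+a)(s+x)=\tfrac{1}{4}(\sqrt{y}-x+3a)(\sqrt{y}+x+a),\qquad (x-a+2s)\,ds=\tfrac{1}{4}\,dy,
\]
with new lower limit $y=(x-a)^{2}$. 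Rewriting $e^{-y/(2t)}=2t\cdot p_{\Gamma}(y,1,\tfrac{1}{2t})$ and combining $t^{-3/2}\cdot 2t=2t^{-1/2}$ with $1/\sqrt{2\pi}$ delivers the stated normalisation $\epsilon(x-a)/\sqrt{2\pi t}$.

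The main obstacle is less any single analytic step than the accounting of constants in the last paragraph: one must check that the two factors of $4$ introduced by the new denominator and by the measure $dy$ cancel, and that the combined power of $t$ collapses to exactly $t^{-1/2}$, so that the prefactor $\epsilon(x-a)/\sqrt{2\pi t}$ falls out cleanly. Once that bookkeeping is done, uniqueness of the inverse Laplace transform closes the proof.
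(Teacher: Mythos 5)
Your proof is correct and follows essentially the same route as the paper: invert the $\ln$-term directly via the inverse-Gaussian kernel, represent $e^{z}E_1(z)$ as a Laplace-type integral, swap by Fubini, and change variables to $y$. The only cosmetic difference is that you telescope $e^{2\gamma a}E_1(2\gamma a)-e^{2\gamma x}E_1(2\gamma x)$ into the single integral $(x-a)\int_0^\infty e^{-2\gamma s}/((s+a)(s+x))\,ds$ before inverting, whereas the paper inverts each $E_1$-term separately (with denominator $\sqrt{y}-(x-a)+z$ for $z=2a,2x$) and then combines the two fractions; both lead to identical bookkeeping of the factors of $4$ and the power $t^{-1/2}$.
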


\begin{proof}
Rearrange terms in the first order LT,
\begin{equation}\label{eqn:above2}
f^1(x,\beta)=f_0(x,\beta)\left(1+ \frac{\epsilon}{2}\ln( \frac{a}{x})\right)+\epsilon f_0(x,\beta)\frac{e^{2\gamma a}E_1(2\gamma a)-e^{2\gamma x}E_1(2\gamma x)}{2}.
\end{equation}
The inverse of the first term in \eqref{eqn:above2} is given by inverse Gamma distribution \cite{borodin2012handbook}. Now consider the second term. Recall the definition of $E_1(z)$ that
\[
E_1(z):=\int_z^{+\infty}\frac{e^{-u}}{u}du.
\]
By multiplying $e^z$ we have 
\begin{equation}\label{eqn:integralLT}
e^zE_1(z)=\int_0^\infty\frac{e^{-uz}}{u+1}du. 
\end{equation}
Assume we can change the order of integrals, then substituting \eqref{eqn:integralLT} into the second term of \eqref{eqn:above2} yields
\begin{align}
\LT^{-1}\set{e^{-\sqrt{2\beta}(x-a)}e^{z\sqrt{2\beta}}E_1(z\sqrt{2\beta})}(t)&=\int_0^\infty \frac{\LT^{-1}\set{e^{-\sqrt{2\beta}(x-a+uz)}}(t)}{u+1}du \label{inverseofIntegralLTcon}\\
&=\int_0^\infty\frac{p^{(0)}_{\tau, x-a+uz, 0}(t)}{u+1}du \label{inverseofIntegralLT},
\end{align}
where $p^{(0)}_{\tau,  x-a+uz, 0}(t)$ is defined by \eqref{eqn:new0def}; more precisely, by letting 
\begin{equation}\label{y_substitution}
y:=(x-a+uz)^2,
\end{equation}
\eqref{inverseofIntegralLT} further gives 
\begin{equation}\label{finaleqnaxsub}
\LT^{-1}\set{e^{-\sqrt{2\beta}(x-a)}e^{z\sqrt{2\beta}}E_1(z\sqrt{2\beta})}(t)=\frac{1}{\sqrt{2\pi \cdot t}}\int_{(x-a)^2}^\infty \frac{p_\Gamma(y,1,\frac{1}{2t})}{\sqrt{y}-(x-a)+z}dy,
\end{equation}
where $p_\Gamma(y,1,\frac{1}{2t})$ is the density function of Gamma distribution with shape parameter $1$ and rate parameter $\frac{1}{2t}$. The rest of the proof is concluded by substituting $z=2a$ and $z=2x$ into \eqref{finaleqnaxsub}, respectively.
\end{proof}
\\

\begin{rmk}
The perturbed density for $BES(1+\epsilon)$ is continuous on $(0,+\infty)$. As a sufficient but not necessary condition, as long as\footnote{In fact, in the case $\epsilon<0$ (this gives a Bessel process of an order between $0$ and $1$) the inequality holds true for any $a\le x$.}
\[
1+\frac{\epsilon}{2}\ln(\frac{a}{x})\ge 0,
\]
the function $p_\tau^{(1)}(t)$ then is guaranteed to be positive. Therefore, combine corollary \ref{cor53}, the first order perturbed FPTD is a valid probability density function. On the other hand, by using properties of integer Bessel functions in \cite[Chapter III]{watson1995treatise}, potentially one can derive FPTDs of $BES(n+\epsilon)$ with $n\in\mathbb{Z}^{+}$.
\end{rmk}

\begin{prop}[Tail Asymptotics of $BES(1+\epsilon)$ Perturbed FPTD]
Tail asymptotics of the first order perturbed $BES(1+\epsilon)$ FPTD are given by
\begin{align*}
p_\tau^{(1)}(t)&\sim p_{\tau,x,a}^{(0)}(t),\ \text{as }t\downarrow 0^{+},\tag*{(Left Tail)}\\
p_\tau^{(1)}(t)&\sim O(t^{-1-\alpha}),\ \text{as }t\uparrow +\infty,\tag*{(Right Tail)}
\end{align*}
where $0<\alpha<\frac{1}{2}$.
\end{prop}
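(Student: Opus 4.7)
The approach is to work from the explicit decomposition of $p_\tau^{(1)}(t)$ supplied by proposition~\ref{prop:54}, namely the sum of a constant multiple of the Brownian FPTD $p_{\tau,x,a}^{(0)}(t)=\frac{x-a}{\sqrt{2\pi}}t^{-3/2}e^{-(x-a)^2/(2t)}$ and the integral term against the Gamma density $p_\Gamma(y,1,\tfrac{1}{2t})=\tfrac{1}{2t}e^{-y/(2t)}$ with rational weight $[(\sqrt{y}-x+3a)(\sqrt{y}+x+a)]^{-1}$. The first summand's asymptotics in each tail are immediate from its explicit form, so the plan reduces to estimating the integral and comparing orders.

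For the left tail $t\downarrow 0^+$, I would apply a Watson-type argument. Shifting $u=y-(x-a)^2$ factors $e^{-(x-a)^2/(2t)}$ outside the integral, and the measure $\tfrac{1}{2t}e^{-u/(2t)}\,du$ concentrates at $u=0$. Evaluating the smooth rational weight at the boundary $y=(x-a)^2$ gives $(\sqrt{y}-x+3a)(\sqrt{y}+x+a)=(2a)(2x)$, so the integral is asymptotically $\frac{1}{4ax}e^{-(x-a)^2/(2t)}(1+O(t))$. After multiplication by the prefactor $\epsilon(x-a)/\sqrt{2\pi t}$ this piece is of order $t^{-1/2}e^{-(x-a)^2/(2t)}$, which is strictly $o(p_{\tau,x,a}^{(0)}(t))$ because the latter carries $t^{-3/2}$. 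Only the first summand contributes at leading order, yielding $p_\tau^{(1)}(t)\sim p_{\tau,x,a}^{(0)}(t)$.

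For the right tail $t\uparrow+\infty$, the first summand is $\Theta(t^{-3/2})$ since $e^{-(x-a)^2/(2t)}\to 1$. For the integral, I would substitute $v=\sqrt{y}$ to obtain $\tfrac{1}{t}\int_{x-a}^\infty \frac{v\,e^{-v^2/(2t)}}{(v-x+3a)(v+x+a)}\,dv$, then rescale $w=v/\sqrt{t}$ so the Gaussian is localised at $w$ of order $1$. Splitting the $w$-integral into an outer range $w\ge 1$ (where the rational factor is of order $1/(w^2 t)$ and the Gaussian makes the contribution $O(1)$) and an intermediate range $w\in[t^{-1/2},1]$ (where the integrand is of order $1/w$, producing $\int dw/w$ and hence a $\tfrac{1}{2}\log t$ factor) shows the rescaled integral is $O(\log t)$. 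Thus the integral piece of $p_\tau^{(1)}(t)$ is $O(t^{-3/2}\log t)$, and combining with the first summand gives $p_\tau^{(1)}(t)=O(t^{-3/2}\log t)=O(t^{-1-\alpha})$ for any $0<\alpha<\tfrac{1}{2}$.

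The main obstacle will be the right-tail estimate. The integrand decays only like $1/v$ at infinity, and the Gaussian cutoff at $v\sim\sqrt{t}$ produces precisely the borderline logarithmic correction; handling the intermediate scale $v=O(1)$ where both the rational denominator and the Gaussian play a role requires the splitting above rather than a single uniform bound. This unavoidable $\log t$ factor is exactly the reason the proposition states $\alpha<\tfrac{1}{2}$ strictly rather than claiming a sharp $t^{-3/2}$ rate.
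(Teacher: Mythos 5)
Your proposal is correct and takes a genuinely different — and in fact sharper — route than the paper. The paper passes through the convolution representation \eqref{eqngdefunction}, defines $\lambda(t)$ and $\mu(t)$ as in \eqref{eqn:defat}--\eqref{eqn:defbt}, then for the left tail applies l'H\^opital to $\mu/\lambda$ (using the initial value theorem to identify the $\tfrac{0}{0}$ form), and for the right tail constructs a lower bound $\mu_K$ by truncating the convolution to $v\in[0,K]$, shows $\mu_K(t)/\lambda(t)\to\infty$ to conclude the decay is strictly slower than $t^{-3/2}$, and finally invokes the integrability of $\mu$ (via the final value theorem and corollary \ref{cor53}) to argue that $\mu$ must still decay like $t^{-1-\alpha}$ with $\alpha>0$. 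That last step — passing from $\int_0^\infty\mu\,dt<\infty$ to pointwise polynomial decay — is indirect and not fully rigorous without extra regularity. Your Laplace/Watson-type argument avoids this entirely: working directly on the Gamma-weighted integral, you localize at $y=(x-a)^2$ for $t\downarrow 0^+$ and at the scale $v\sim\sqrt{t}$ for $t\uparrow\infty$, and you identify the precise $t^{-3/2}\log t$ rate, which immediately produces (and explains) the constraint $0<\alpha<\tfrac12$ in a way the paper only asserts. One bookkeeping slip: after the substitution $w=v/\sqrt{t}$, the Jacobian and the factor $v=w\sqrt t$ absorb the $1/t$ prefactor, so the rescaled $w$-integral is in fact $O(\log t/t)$, not $O(\log t)$ — the integrand on $[t^{-1/2},1]$ is $\sim 1/(wt)$, not $1/w$. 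Your final claim $O(t^{-3/2}\log t)$ for the integral piece is the correct one and is consistent only with the $O(\log t/t)$ figure, so the arithmetic implicitly went through; just state the intermediate order correctly.
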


\begin{proof}
We start from the left tail. The first term in $p_\tau^{(1)}(t)$ has the same order as the FPTD of Brownian motion. We only need to check the second term, which involves an expectation of Gamma variable. Consider a convolution representation of \eqref{inverseofIntegralLT}. From \eqref{inverseofIntegralLTcon}, we have
\begin{equation}\label{eqngdefunction}
\frac{1}{\sqrt{2\pi \cdot t}}\int_{(x-a)^2}^\infty  \frac{p_\Gamma(y,1,\frac{1}{2t})dy}{(\sqrt{y}-x+3a)(\sqrt{y}+x+a)}dy=\int_0^t p^{(0)}_{\tau,x,a}(v) \int_0^\infty \frac{p_{\tau,2ua,0}^{(0)}(t-v)-p_{\tau,2ux,0}^{(0)}(t-v)}{2(u+1)}dudv.
\end{equation}
Denote by
\begin{equation}\label{eqn:defat}
\lambda(t):=t^{-\frac{3}{2}}e^{-\frac{(x-a)^2}{2t}}
\end{equation}
and
\begin{equation}\label{eqn:defbt}
\mu(t):=\int_0^t \lambda(v)\int_0^\infty \frac{p_{\tau,2ua,0}^{(0)}(t-v)-p_{\tau,2ux,0}^{(0)}(t-v)}{2(u+1)}dudv
\end{equation}
for asymptotics of the inverse Gamma density and the convolution, respectively. Let $t\downarrow 0^{+}$, then \eqref{eqn:defbt} yields
\begin{equation}\label{eqn:btasymptoticssmall}
\mu(t)\sim \int_0^t \lambda(v)\int_0^\infty \frac{p_{\tau,2ua,0}^{(0)}(v)-p_{\tau,2ux,0}^{(0)}(v)}{2(u+1)}dudv.
\end{equation}
Define 
\[
r(t):=\frac{\mu(t)}{\lambda(t)}.
\]
First, note that the limit of $r(t)$ at $0$ is $\frac{0}{0}$-type. To see this, consider the LT of $\mu(t)$. Based on the initial value theorem, we have
\[
\lim_{t\downarrow 0^{+}}\mu(t)=\lim_{\beta\uparrow +\infty}\left(\beta e^{-\sqrt{2\beta}(x-a)}\left(e^{2a\sqrt{2\beta}}E_1(2a\sqrt{2\beta})-e^{2x\sqrt{2\beta}}E_1(2x\sqrt{2\beta})\right)\right).
\]
According to \cite[6.12.1]{lozier2003nist}, we further get
\begin{equation}\label{eqn:above3}
\lim_{t\downarrow 0^{+}}\mu(t)=\lim_{\beta\uparrow +\infty}\left(\beta e^{-\sqrt{2\beta}(x-a)}\left(\frac{1}{2a\sqrt{2\beta}}-\frac{1}{2x\sqrt{2\beta}}\right)\right)=0.
\end{equation}
Now apply L'H\^opital's rule on $r(t)$. Based on the asymptotic in \eqref{eqn:btasymptoticssmall}, the limit of $r(t)$ is given by
\begin{align*}
\lim_{t\downarrow 0^{+}} \frac{\mu(t)}{\lambda(t)}=&\lim_{t\downarrow 0^{+}}\frac{\lambda(t)\int_0^\infty  \frac{p_{\tau,2ua,0}^{(0)}(t)-p_{\tau,2ux,0}^{(0)}(t)}{2(u+1)}du}{\lambda^{'}(t)}\\
=&\lim_{t\downarrow 0^{+}}\frac{t^{-\frac{6}{2}}\int_0^\infty \left(\frac{ua}{u+1}e^{-\frac{2u^2a^2}{t}}-\frac{ux}{u+1}e^{-\frac{2x^2a^2}{t}}\right)du}{t^{-\frac{7}{2}}}\\
=&0.
\end{align*}
Therefore $\mu(t)=o(\lambda(t))$, and the left tail asymptotic is $\left(1+ \frac{\epsilon}{2}\ln( \frac{a}{x})\right)p_{\tau,x,a}^{(0)}(t)$.

For the right tail, consider \eqref{eqngdefunction}, we further have
\[
\int_0^\infty \frac{p_{\tau,2ua,0}^{(0)}(t-v)-p_{\tau,2ux,0}^{(0)}(t-v)}{2(u+1)}du = t^{-\frac{3}{2}}\int_0^\infty \frac{w \phi(w)}{(w+\frac{2a}{\sqrt{}t})(w+\frac{2x}{\sqrt{t}})}dw\ge 0,
\]
where $\phi(w)$ is the density of standard normal distribution. Therefore,
\[
\lambda(v)\int_0^\infty \frac{p_{\tau,2ua,0}^{(0)}(t-v)-p_{\tau,2ux,0}^{(0)}(t-v)}{2(u+1)}du\ge 0,\ \forall v\in (0,t).
\]
Let $K<<t$ be a large but fixed constant, then
\begin{equation}\label{eqn:bknotation}
\mu(t)\ge\int_0^K \lambda(v)\int_0^\infty \frac{p_{\tau,2ua,0}^{(0)}(t-v)-p_{\tau,2ux,0}^{(0)}(t-v)}{2(u+1)}dudv.
\end{equation}
Denote the right-hand side of \eqref{eqn:bknotation} by $\mu_K(t)$. For $0\le v\le K<<t$, we further have
\begin{equation}\label{eqnasymptbt111}
\mu_K(t)\sim \int_0^\infty \frac{p_{\tau,2uz,0}^{(0)}(t)}{2(u+1)}du\sim t^{-\frac{3}{2}}\int_0^\infty \frac{u}{u+1}e^{-\frac{u^2}{t}}du.
\end{equation}
Since $t^{-\frac{3}{2}}$ is the asymptotic of $\lambda(t)$ for large $t$, and 
\[
\lim_{t\uparrow +\infty}\frac{\mu_K(t)}{t^{-\frac{3}{2}}}=\int_0^\infty \frac{u}{u+1}du=\infty,
\]
so according to \eqref{eqn:bknotation} we find
\begin{equation}\label{eqnatcomparison}
\lim_{t\uparrow +\infty}\frac{\mu(t)}{\lambda(t)}\ge \lim_{t\uparrow +\infty}\frac{\mu_K(t)}{\lambda(t)}=\infty.
\end{equation}
This yields
\[
p_\tau^{(1)}(t)\sim \mu(t).
\]

The next step is to confirm that $\mu(t)$ is a valid asymptotic, \emph{i.e.} itself does not diverge. Indeed, applying the final value theorem one immediately has
\[
\lim_{t\uparrow +\infty}\mu(t)=0.
\]
In addition, note that previously in corollary \ref{cor53}, we have shown 
\[
\lim_{\beta\downarrow 0^{+}}f_1(x,\beta)=\lim_{\beta\downarrow 0^{+}}f_0(x,\beta)g_1(x,\beta)=0,
\]
which further yields
\begin{equation}\label{invtempadded}
\int_{0}^\infty \LT^{-1}\set{f_1(x,\beta)}(t) dt = 0.
\end{equation}
Since the inverse in \eqref{invtempadded} is given by 
\[
\LT^{-1}\set{f_1(x,\beta)}(t)=\epsilon\left(\frac{1}{2}\ln(\frac{a}{x})p_{\tau,x,a}^{(0)}(t)+\frac{(x-a)^2}{\sqrt{2\pi}}\mu(t)\right),
\]
we further get
\begin{align}
\int_0^\infty \mu(t)dt =&\frac{\sqrt{2\pi}}{2(x-a)^2}\ln(\frac{x}{a})\int_0^\infty p_{\tau,x,a}^{(0)}(t)dt\label{eqnbounpre}\\
=&\frac{\sqrt{2\pi}}{2(x-a)^2}\ln(\frac{x}{a}).\label{eqnbound}
\end{align}
The equality \eqref{eqnbound} comes from the fact that $p_{\tau,x,a}^{(0)}(t)$ is the \emph{p.d.f.} of inverse Gamma distribution. As a necessary condition for \eqref{eqnbound} to hold true, the convergence rate of $\mu(t)$ should be
\[
\mu(t)=O(t^{-1-\alpha}),
\]
for some $\alpha>0$. Combining \eqref{eqnatcomparison}, in the end we show $0<\alpha < \frac{1}{2}$. 
\end{proof}
\\

\begin{prop}[Error Estimation and Convergence of $BES(1+\epsilon)$]\label{prop:prop46}
The $\eta$-function is given by
\begin{equation}\label{etaforBes}
\eta(x,t)=\frac{1}{2}\ln(\frac{x}{a})\rho(x,a,t)-\frac{1}{4t}\int_{(x-a)^2}^\infty \left(\frac{p_\Gamma(y,\frac{3}{2},\frac{1}{2t})-p_\Gamma(y,\frac{1}{2},\frac{1}{2t})}{\sqrt{y}-x+3a}+\frac{p_\Gamma(y,\frac{3}{2},\frac{1}{2t})-p_\Gamma(y,\frac{1}{2},\frac{1}{2t})}{\sqrt{y}+x+a}\right)dy,
\end{equation}
where
\[
\rho(x,a,t):=\frac{1}{\sqrt{2\pi}}t^{-\frac{5}{2}}\left((x-a)^2-t\right)e^{-\frac{(x-a)^2}{2t}};
\]
and $p_\Gamma(y,\alpha,s)$ is the density of Gamma distribution with shape parameter $\alpha$ and rate parameter $s$. Besides, for all $t\in [0,+\infty)$, the probabilistic representation \eqref{eqn:errorestimateqfunc} is valid. And for $t\in [0,T]$ with fixed $T>0$, the first order perturbed FPTD of $BES(1+\epsilon)$ converges at rate $o(\epsilon^2)$.
\end{prop}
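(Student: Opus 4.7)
The plan divides the proof into three stages, mirroring the structure of Proposition \ref{firorderOU} for the OU case.

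First, I would derive $\eta(x,t)=\LT^{-1}\set{\partial_x f_1(x,\beta)}(t)$ directly from Lemma \ref{lemma51}. Writing $f_1=f_0 g_1$ and using $\partial_x f_0=-\gamma f_0$ together with the identity $\frac{d}{dx}\brck{e^{2\gamma x}E_1(2\gamma x)}=2\gamma e^{2\gamma x}E_1(2\gamma x)-\frac{1}{x}$, the $\frac{1}{x}$ contributions cancel and $\partial_x f_1$ collapses into $-\frac{\gamma}{2}f_0\brck{\ln(a/x)+e^{2\gamma a}E_1(2\gamma a)+e^{2\gamma x}E_1(2\gamma x)}$. Inverting term by term uses two ingredients: the identity $\LT^{-1}\set{\gamma f_0}(t)=\rho(x,a,t)$, which follows from $\sqrt{2\beta}e^{-\sqrt{2\beta}z}=-\frac{d}{dz}e^{-\sqrt{2\beta}z}$ with $z=x-a$; and the convolution trick of Proposition \ref{prop:54} applied to the two $E_1$-terms, now with an extra factor of $\gamma$ in front. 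The algebraic identity $(y-t)y^{-1/2}e^{-y/(2t)}=\sqrt{2\pi}t^{3/2}\brck{p_\Gamma(y,\tfrac{3}{2},\tfrac{1}{2t})-p_\Gamma(y,\tfrac{1}{2},\tfrac{1}{2t})}$ then explains why both Gamma shapes $\tfrac{1}{2}$ and $\tfrac{3}{2}$ appear in \eqref{etaforBes}; collecting factors of $\tfrac{1}{2t}$ gives the advertised formula.

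Second, I would verify the integrability condition \eqref{eqn:eqnboundnesscondition}. The decisive simplification in the Bessel case is that on $\set{u\le \tau}$ one has $X_u\ge a>0$ (the process is hitting $a$ from above), so $\abs{h(X_u)}=\frac{1}{2X_u}\le\frac{1}{2a}$ is uniformly bounded; this is the Bessel analogue of the linear $\abs{\theta-x}$ control used in Proposition \ref{firorderOU}. Following that template, I would fix a large $K$ and split $\E_x\brcksqr{\int_0^{\tau\wedge t}\abs{h(X_u)\eta(X_u,t-u)}du}$ according to $\set{X_u\le K}$ and $\set{X_u>K}$. On the compact region $[a,K]$ the continuity of $\eta$ (with the apparent singularity at $(x,s)=(a,0)$ removed by the regularity of $\tau$, as in Proposition \ref{firorderOU}) gives a bound linear in $t$. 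On $\set{X_u>K}$ the Gaussian factor $e^{-(x-a)^2/(2t)}$ inside $\rho$ and the exponential Gamma-density tails in \eqref{etaforBes} decay fast enough in $x$ to absorb polynomial factors, yielding a bound of the form $C_0+C_1\sqrt{t}+C_2 t$ through a Bessel analogue of Lemma \ref{firorderOUlemma}.

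Once such a bound is in hand, condition \eqref{eqn:eqnboundnesscondition} holds for every $\beta$ with positive real part, so the probabilistic representation \eqref{eqn:errorestimateqfunc} is valid for all $t\in [0,+\infty)$; restricting to $t\in[0,T]$ then gives \eqref{eqn:assumpboundnedness} with $M=M(T)<+\infty$, and \eqref{eqn:errorestimateboundedness} with $N=1$ immediately delivers the $o(\epsilon^2)$ convergence. The principal obstacle is the large-$x$ estimate on $\set{X_u>K}$: one must show that the integrand in \eqref{etaforBes} decays in $x$ uniformly for $t\in[0,T]$, which requires controlling the $(\sqrt{y}\pm x+\text{const})^{-1}$ kernels by the Gaussian tails of the Gamma densities and then using the explicit $BES(1+\epsilon)$ transition density to integrate out $X_u$ against the first-passage law. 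All remaining steps parallel the OU argument without substantive new difficulty.
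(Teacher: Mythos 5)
Your overall plan matches the paper's: differentiate $f_1$, invert via the $\sqrt{2\beta}e^{-\sqrt{2\beta}z}$ tabulated transform and the same integral representation of $e^zE_1(z)$ used in Proposition~\ref{prop:54}, then verify~\eqref{eqn:eqnboundnesscondition} by the $\{X_u\le K\}$/$\{X_u>K\}$ split and recycle Lemma~\ref{firorderOUlemma}. Your derivative computation is correct; in fact the paper's displayed $\partial_x f_1$ carries a sign slip ($\ln(x/a)$ in place of $\ln(a/x)$), and your version is the one consistent with the final $\eta$ in~\eqref{etaforBes}. Where you misjudge the effort is the Gamma-density piece of $\eta$ on $\{X_u>K\}$: you propose to control the $(\sqrt{y}\pm x+\text{const})^{-1}$ kernels by Gaussian tails and to bring in the $BES(1+\epsilon)$ transition density, but none of that is needed. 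Since $y\ge(x-a)^2$ and $x>a$, both kernels are $\le\frac{1}{2a}$ outright, and $\frac{1}{4t}\int_{(x-a)^2}^{\infty}p_\Gamma(y,\alpha,\tfrac{1}{2t})\,dy\le\frac{\Gamma(\alpha+1)}{2\Gamma(\alpha)}$ for the relevant range of $y$, so that entire contribution is bounded by a constant uniformly in $x$ and $t$, with no appeal to the law of $X_u$ beyond $\mathbb{P}_x(X_u>K)\le 1$. The genuinely delicate piece is the $\tfrac{1}{2}\ln(x/a)\,\rho(x,a,t)$ term: for $x>K$ the paper uses $\ln(x/a)\le x-a$ to reduce it to expectations of $\frac{x^r}{(t-u)^{1+\alpha}}e^{-x^2/(2(t-u))}$, which is exactly what Lemma~\ref{firorderOUlemma} bounds by $C_0+C_1\sqrt{t}+C_2t$. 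Your observation that $|h(X_u)|=\tfrac{1}{2X_u}\le\tfrac{1}{2a}$ on $\{u\le\tau\}$ is correct and actually simpler than the paper's $\tfrac{1}{4x}\le\tfrac{1}{4K}$ cut, and the concluding step to $o(\epsilon^2)$ via~\eqref{eqn:assumpboundnedness}--\eqref{eqn:errorestimateboundedness} on $[0,T]$ is the same as the paper's.
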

\begin{proof}
Taking partial derivative on $f_1(x,\beta)$ yields
\[
\partial_xf_1(x,\beta)=-\gamma f_0(x,\beta)\frac{\ln(\frac{x}{a})+e^{2\gamma a}E_1(2\gamma a)+e^{2\gamma x}E_1(2\gamma x)}{2}.
\]
According to \cite{bateman1954tables}, for some positive $z$, we have
\[
\LT^{-1}\set{\sqrt{2\beta}e^{-\sqrt{2\beta}z}}(t)=\frac{1}{\sqrt{2\pi}}t^{-\frac{5}{2}}\left(z^2-t\right)e^{-\frac{z^2}{2t}}.
\]
Then by using the same trick in \eqref{inverseofIntegralLT}, we prove the result of $\eta$-function.

Consider the error estimation with the $h$-function shown in \eqref{hufnctionforBessel}. The first part (when $x\in [a,K]$ for some large $K$) proof is given similarly as in the proof of proposition \ref{firorderOU}. When $x>K$ and $K$ is large enough, we have $\ln(\frac{x}{a})\le (x-a)$. Therefore 
\begin{equation}\label{eqnaboundenessBES}
\left|\frac{1}{4x}\ln(\frac{x}{a})\rho(x,a,t)\right|\le \frac{1}{4K}\frac{1}{\sqrt{2\pi}}t^{-\frac{5}{2}}\left((x-a)^3+t(x-a)\right)e^{-\frac{(x-a)^2}{2t}}.
\end{equation}
And the boundedness of \eqref{eqnaboundenessBES} is given by the proof in lemma \ref{firorderOUlemma}. The only thing left then, is to show the Gamma-density part in \eqref{etaforBes} satisfies \eqref{eqn:eqnboundnesscondition} and \eqref{eqn:assumpboundnedness}. First note that $y\ge (x-a)^2>0$ and $x>a$, therefore
\[
\max\set{\frac{1}{\sqrt{y}-x+3a},\frac{1}{\sqrt{y}+x+a}}\le \frac{1}{2a}.
\]
For the rest of the integral, consider a shape parameter $\alpha>0$. Then for $x>K$ and such that $x-a>1$, given $y>(x-a)^2>1$, we have
\begin{align*}
\frac{1}{4t}\int_{(x-a)^2}^\infty p_\Gamma(y,\alpha,\frac{1}{2t})dy&=
\frac{1}{2\Gamma(\alpha)}\int_{(x-a)^2}^\infty \frac{1}{(2t)^{\alpha+1}}y^{\alpha-1}e^{-\frac{y}{2\alpha}}dy\\
&\le \frac{\Gamma(\alpha+1)}{2\Gamma(\alpha)}\int_{(x-a)^2}^\infty \frac{y^{\alpha}}{\Gamma(\alpha+1)(2t)^{\alpha+1}}e^{-\frac{y}{2\alpha}}dy\\
&\le \frac{\Gamma(\alpha+1)}{2\Gamma(\alpha)}.
\end{align*}
The last inequality comes from the \emph{c.d.f.} of Gamma distribution. Substituting $\alpha=\frac{3}{2}$ and $\alpha=\frac{1}{2}$ into the inequality above then concludes our proof.
\end{proof}
\\

\section{Numerical Examples on Downward OU FPTD}
In this section, we illustrate our framework via two sets of OU process exercises. Note that through our discussions the perturbed OU FPTD is the only one which may not converge, or converge slowly with a fat-tail asymptotic (see proposition \ref{prop:tailasmptotocsou}). Therefore it is more meaningful to check the effectiveness of the framework under this extreme scenario. More numerical results, including the application on the exponential-Shiryaev process \cite{dassios2018economic}, can be found in the appendix. 

We consider a generalised OU model with constant volatility:
\[
dX_t=\epsilon(\theta-X_t)dt + \sigma dW_t.
\]
The starting point is $X_0=x$ and hitting level is denoted by $l<x$. The affine transformation $Y_t=\frac{X_t-l}{\sigma}$ yields
\[
dY_t=\epsilon(\hat{\theta}-Y_t)dt+dW_t,
\]
where $Y_0=\frac{x-l}{\sigma}$ and $\hat{\theta}=\frac{\theta-l}{\sigma}$. By considering the new hitting level $\hat{l}=0$, the initial hitting time for $X_t$ from $x$ to $l$ is equivalent to the problem for $Y_t$ from $Y_0$ to $\hat{l}$. The $Y_t$-hitting time then can be solved explicitly via results in section \ref{sec4}. Apart from the perturbation method, there are many other studies in finding the FPTD of OU process. For benchmarking purpose we list them below
\begin{itemize}
\item[i).] Talbot method of numerical inverse LT \cite{abate2006unified}
\item[ii).] FPTD representation using spectral decomposition \cite{alili2005representations}
\item[iii).] 3-dimensional Brownian bridge simulation \cite{ichiba2011efficient}
\item[iv).] closed-form solution in the special case $l=\theta$ \cite{going2003clarification}
\end{itemize}

Note that the closed-form density in iv) does not involve any numerical approximation. Therefore we can treat it as the `true' density. In general cases where $l\neq \theta$, there is no closed-form solution found yet. As an alternative, we use Talbot approach to be our benchmark. Model parameters are selected as follows
\[
\epsilon = 0.1,\ \theta = 0.3,\ \sigma = 0.3,\ x = 0.5.
\]
We conduct two sets of exercises. In the first one we consider $l=0.3$. And in the second group, we let $l=0.2$.

\subsection{Benchmark Comparison for $l=\theta$}\label{sec51}
In this section, we only compare the Talbot method and first order perturbation with the closed-form solution. Figures \ref{fig1} and \ref{fig2} show densities and their relative errors (\emph{w.r.t.} approach iv)) in 5 years time.

Green dots in figure \ref{fig1} indicate `true' densities. The blue line and orange segment curves plot Talbot inverse and first order perturbation, respectively. From visual observations we find three densities coincide with each other. This confirms that our perturbed FPTD is valid.

 \begin{figure}[h]
 \centering
 \begin{minipage}[c]{0.48\textwidth}
 \centering
        \includegraphics[width=1.\textwidth, height = .8\textwidth]{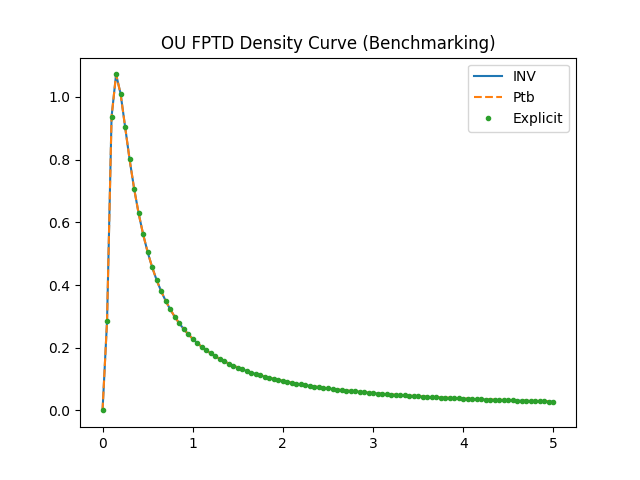}
        \caption{FPT density of OU process}
        \label{fig1}
 \end{minipage}
\hspace{1ex}
\begin{minipage}[c]{0.48\textwidth}
\centering
        \includegraphics[width=1.\textwidth, height = 0.8\textwidth]{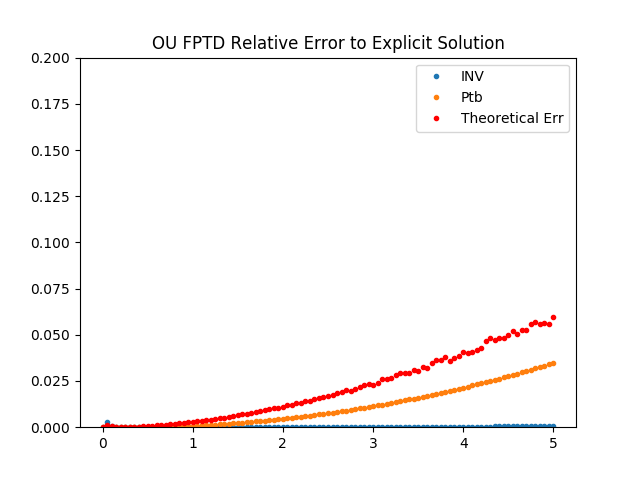}
        \caption{Relative error to iv)}
        \label{fig2}
 \end{minipage}
 \end{figure}

In order to compare accurateness and verify our error estimation formulae, we demonstrate relative errors in figure \ref{fig2}. Blue and yellow dots depict realised errors, which are calculated from Talbot inverse and perturbation density by benchmarking them on closed-form solution. Red dots are numerical evaluations from proposition \ref{prop21} and lemma \ref{lemma410}. We refer them to be theoretical errors. In the computation of $q_\tau(t)$ we simulate 1000 paths with $dt = \frac{t}{1000}$. And the theoretical error is calculated by
\[
err = \left|\frac{q_\tau(t)}{p^{(1)}_\tau(t)+q_\tau(t)}\right|.
\]

From figure \ref{fig2} we see in general the Talbot inverse is very accurate, apart from that there is a peak for small $t$. In fact, by checking figure \ref{fig6} in appendix, we find perturbation approach outperforms the Talbot inverse on the left tail. Considering the fact that perturbation provides smooth asymptotic on the left tail (proposition \ref{prop:tailasmptotocsou}), this result is not surprising. On the other hand, error function on perturbation diverges when $t$ becomes large. Although this is not very encouraging, it confirms our analysis for the right tail asymptotic. 

In figure \ref{fig2}, by further comparing theoretical and realised errors we find they increase at the same rate. This verifies that proposition \ref{prop21} provides reasonable estimate to the level of error. The spread of them could be explained by limitations from Euler's simulation scheme. By reducing $dt$ to $0$, theoretically, we should be able to match red and orange curves. However, in practice we are more interested in the error range rather than exact error values; otherwise, the problem becomes equivalent to solve FPTD via Monte Carlo simulations.

In terms of computation speed, we provide the time of evaluating 100 density points. Without considering the initialisation of $\set{c_k^{(i,j)}}$ sequence\footnote{The sequence does not involve any parameter in OU model. Therefore it is only initialised once and pre-saved in disk.}, the perturbation has the same speed as the explicit density. Both of them spend about 0.001 seconds. The Talbot inverse spends about 1.371 seconds - approximately 1371 times slower than other two methods. 

\subsection{General Case Comparison for $l\neq\theta$}
In the second exercise, we consider $l\neq \theta$ and illustrate results in the same way as before. Note that for $l\neq\theta$ we do not have any knowledge of the `true' density. Considering in last section the Talbot inverse is almost the same as the closed-form solution, we therefore use it for benchmarking.

 \begin{figure}[h]
 \centering
 \begin{minipage}[c]{0.48\textwidth}
 \centering
        \includegraphics[width=1.\textwidth, height = .8\textwidth]{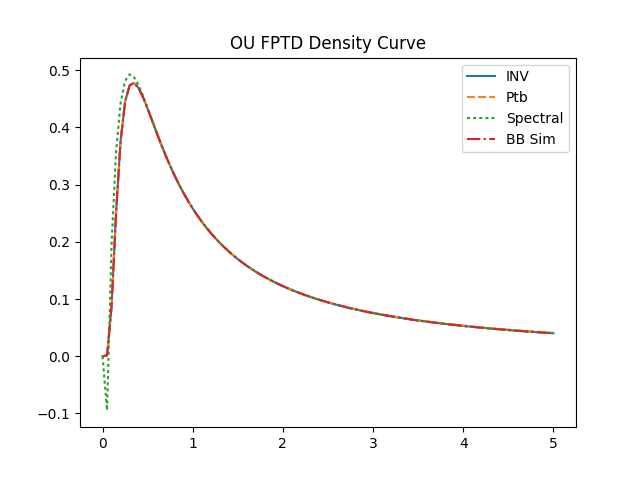}
        \caption{FPTD of OU process for general case}
        \label{fig3}
 \end{minipage}
\hspace{1ex}
\begin{minipage}[c]{0.48\textwidth}
\centering
        \includegraphics[width=1.\textwidth, height = 0.8\textwidth]{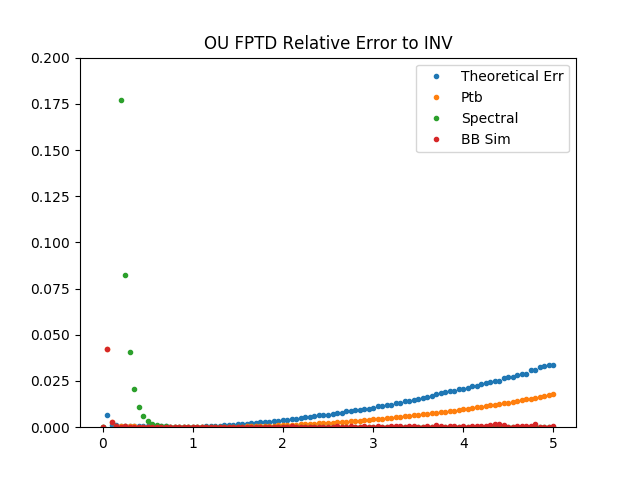}
        \caption{Relative error to i) for general case}
        \label{fig4}
 \end{minipage}
 \end{figure}

An immediate observation from figure \ref{fig3} is that the spectral decomposition does not provide a good estimate on the left tail. By checking \cite{alili2005representations} one can find this is due to the divergence of spectral series at $t=0$. Apart from the green curve, the rest three methods provide almost identical results. This confirms that the perturbation method works for a general case.

Results in figure \ref{fig4} are similar to what we have seen in section \ref{sec51}. We focus on explaining the spectral approach and Brownian bridge simulation. Observed from error plots, for large $t$ those two methods can provide very accurate estimates. However, we have to mention that their accuracy is based on the cost of requiring extra computational resources. In terms of the perturbation approach, although it is not as accurate as other methods (with a relative error $<2.5\%$), it maintains an overwhelming advantage in computation efficiency.

\section{Conclusion}
In this paper, we provide a systematic approach to solve closed-form asymptotics on FPTDs. We demonstrate the convergence in perturbation and derive probabilistic representation for error estimate. The perturbation resulted closed-form solution does not only increase computational efficiency; but also provides analytical tractability in understanding FPT distributions at extreme times. Using the framework we find valid approximations for Ornstein-Uhlenbeck, Bessel and exponential-Shiryaev FPTDs. In addition, by considering time-changes, results on Bessel process could be easily extended to the Cox-Ingersoll-Ross process. In the end, theoretical work has been verified by numerical exercises. Potential applications of this paper could be found in survival analysis, bond option pricing, and many others.

\clearpage
\appendix
\section{Recursive Structure for OU Perturbation}
\begin{cor}[Decomposition Structure I]
\label{cor2}
For $i=1$ and $i=2$, $\left\{c_k^{(i,j)}\right\}$ is explicitly given by 

\[
\begin{cases}
c_0^{(1,2)}=\frac{1}{2};\ c_0^{(1,1)}=\frac{1}{2};\ c_1^{(1,1)}=-1;\\
c_0^{(2,4)}=\frac{1}{8};\ c_0^{(2,3)}=\frac{1}{12};\ c_1^{(2,3)}=-\frac{1}{2};\ c_0^{(2,2)}=-\frac{1}{8};\ c_1^{(2,2)}=0;\ c_2^{(2,2)}=\frac{1}{2};\ c_0^{(2,1)}=-\frac{1}{8};\ c_1^{(2,1)}=\frac{1}{2};\ c_2^{(2,1)}=-\frac{1}{2}.
\end{cases}
\]
\end{cor}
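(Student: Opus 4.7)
The plan is to verify the corollary by straightforward computation, since it is just a base case for a recursive scheme that has already been set up. The quantities $\{c_k^{(i,j)}\}$ are defined implicitly by the decomposition in Lemma \ref{lemma2}, so the natural strategy is to compute $p_j^{(i)}$ explicitly for $i=1,2$, write them as polynomials in $1/\gamma$ and $\theta$, and then read off the $c_k^{(i,j)}$ by matching coefficients.

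For $i=1$, I would start from the closed form
\[
f_1(x,\beta)=e^{-\gamma x}\left(\frac{x^2}{2}+\frac{(1-2\theta\gamma)x}{2\gamma}\right)
\]
stated just before Proposition \ref{prop:prop1}. Dividing by $f_0(x,\beta)=e^{-\gamma x}$ and rewriting the result in the variable $y=\gamma x$ gives $g_1(y)=p_1^{(1)}y+p_2^{(1)}y^2$ with $p_2^{(1)}=\frac{1}{2\gamma^2}$ and $p_1^{(1)}=\frac{1}{2\gamma^2}-\frac{\theta}{\gamma}$. Expanding these according to the template $p_j^{(i)}=\sum_{k}c_k^{(i,j)}\gamma^{-(2i-k)}\theta^k$ from Lemma \ref{lemma2} immediately yields the six listed values for row $i=1$.

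For $i=2$, I would feed $p_1^{(1)}$ and $p_2^{(1)}$ into the recursions of Proposition \ref{prop:prop1}, processing the coefficients in the order $j=2i,\,2i-1,\,2,\,1$ (the generic middle branch $2<j<2i-1$ is vacuous when $i=2$). The top case gives $p_4^{(2)}=p_2^{(1)}/(4\gamma^2)=1/(8\gamma^4)$; the $j=2i-1$ case gives $p_3^{(2)}=\tfrac{1}{12\gamma^4}-\tfrac{\theta}{2\gamma^3}$ after a short simplification; then $p_2^{(2)}=\tfrac{3}{2}p_3^{(2)}-\tfrac{1+\gamma\theta}{2\gamma^2}p_1^{(1)}+\tfrac{\theta}{\gamma}p_2^{(1)}$ collapses to $-\tfrac{1}{8\gamma^4}+\tfrac{\theta^2}{2\gamma^2}$ (the coefficient of $\theta/\gamma^3$ cancelling is what produces $c_1^{(2,2)}=0$); and finally $p_1^{(2)}=p_2^{(2)}+\tfrac{\theta}{\gamma}p_1^{(1)}=-\tfrac{1}{8\gamma^4}+\tfrac{\theta}{2\gamma^3}-\tfrac{\theta^2}{2\gamma^2}$. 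Matching each of these polynomials against the decomposition template recovers the remaining nine constants exactly as asserted.

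There is no genuine obstacle here; the proof is purely mechanical arithmetic, and the existence and well-definedness of the $c_k^{(i,j)}$ has already been justified by Lemma \ref{lemma2}. The only delicate point to mention explicitly is the cancellation producing $c_1^{(2,2)}=0$, since it is the one coefficient where several contributions conspire rather than coming from a single source term; flagging this identity is useful both as a sanity check on the algebra and as evidence that the index range $0\le k\le (2i-j)\wedge i$ in Lemma \ref{lemma2} is not always saturated by nonzero coefficients.
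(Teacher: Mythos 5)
Your proposal is correct and essentially mirrors the paper's intent: the paper's proof (embedded in that of Lemma~\ref{lemma2}) simply says to compare with $f_1$ and $f_2$, and your computation does exactly that, reading $p_1^{(1)}=\tfrac{1}{2\gamma^2}-\tfrac{\theta}{\gamma}$ and $p_2^{(1)}=\tfrac{1}{2\gamma^2}$ off the closed form of $f_1$ and then running the coefficient recursion of Proposition~\ref{prop:prop1} (which for $i=2$ has an empty middle branch) to obtain $p_4^{(2)}=\tfrac{1}{8\gamma^4}$, $p_3^{(2)}=\tfrac{1}{12\gamma^4}-\tfrac{\theta}{2\gamma^3}$, $p_2^{(2)}=-\tfrac{1}{8\gamma^4}+\tfrac{\theta^2}{2\gamma^2}$, $p_1^{(2)}=-\tfrac{1}{8\gamma^4}+\tfrac{\theta}{2\gamma^3}-\tfrac{\theta^2}{2\gamma^2}$, all of which match the decomposition template and yield the asserted constants, including the cancellation producing $c_1^{(2,2)}=0$. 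The only minor cosmetic difference is that you route through the polynomial-coefficient recursion rather than the nested-integral recursion of Lemma~\ref{lemma32} to produce $f_2$; this is the cleaner of the two equivalent paths and is entirely consistent with the paper.
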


\begin{cor}[Decomposition Structure II]
\label{cor3}
For $i\ge 3$, $\left\{c_k^{(i,j)} \right\}$ is recursively determined by 
\[
\begin{cases}
j=2i:&\ c_0^{(i,2i)}=\frac{c_0^{(i-1,2i-2)}}{2i}\\
\\
j=2i-1:&\ c_0^{(i,2i-1)}=\frac{1}{(2i-1)}c_0^{(i-1,2i-3)}-\frac{2i-3}{2(2i-1)}c_0^{(i-1,2i-2)};\ c_1^{(i,2i-1)}=\frac{1}{(2i-1)}\left(c_1^{(i-1,2i-3)}-c_0^{(i-1,2i-2)}\right)\\
\\
2<j<2i-1:&\ \begin{cases}
k=(2i-j)\wedge i:&\ \begin{cases}c_{2i-j}^{(i,j)}=\frac{1}{j}\left(c_{2i-j}^{(i-1,j-2)}-c_{2i-j-1}^{(i-1,j-1)}\right), &\text{ if $j>i$}\\
c_{i}^{(i,i)}=-\frac{1}{i}c_{i-1}^{(i-1,i-1)}, &\text{ if $j=i$}\\
c_{i}^{(i,j)}=\frac{1}{2}(j+1)c_i^{(i,j+1)}-\frac{1}{j}c_{i-1}^{(i-1,j-1)}+c_{i-1}^{(i-1,j)}, &\text{ if $j<i$}
\end{cases}\\
\\
0<k<(2i-j)\wedge i:&\ c_{k}^{(i,j)}=\frac{1}{2}(j+1)c_{k}^{(i,j+1)}+\frac{1}{j}c_{k}^{(i-1,j-2)}-\frac{j-1}{j}c_{k}^{(i-1,j-1)}\\
&\ \ \ \ \ \ \ \ \ \ -\frac{1}{j}c_{k-1}^{(i-1,j-1)}+c_{k-1}^{(i-1,j)}\\
\\
k=0:&\ c_{0}^{(i,j)}=\frac{1}{2}(j+1)c_0^{(i,j+1)}+\frac{1}{j}c_0^{(i-1,j-2)}-\frac{j-1}{j}c_0^{(i-1,j-1)}\\
\\
\end{cases}\\
\\
j=2:&\ \begin{cases}
k=i:&\ c_i^{(i,2)}=\frac{3}{2}c_i^{(i,3)}-\frac{1}{2}c_{i-1}^{(i-1,1)}+c_{i-1}^{(i-1,2)}\\
\\
0<k<i:&\ c_k^{(i,2)}=\frac{3}{2}c_k^{(i,3)}-\frac{1}{2}c_k^{(i-1,1)}-\frac{1}{2}c_{k-1}^{(i-1,1)}+c_{k-1}^{(i-1,2)}\\
\\
k=0:&\ c_0^{(i,2)}=\frac{3}{2}c_0^{(i,3)}-\frac{1}{2}c_0^{(i-1,1)}
\end{cases}\\
\\
j=1:&\ \begin{cases}
k=i:&\ c_i^{(i,1)}=c_i^{(i,2)}+c_{i-1}^{(i-1,1)}\\
\\
0<k<i:&\ c_k^{(i,1)}=c_k^{(i,2)}+c_{k-1}^{(i-1,1)}\\
\\
k=0:&\ c_0^{(i,1)}=c_0^{(i,2)}
\end{cases}\\
\end{cases}.
\]
\end{cor}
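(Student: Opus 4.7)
\textbf{Proof Proposal for Corollary \ref{cor3}.}

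The plan is a direct verification: substitute the ansatz from Lemma \ref{lemma2},
\[
p_j^{(i)} \;=\; \sum_{k=0}^{(2i-j)\wedge i} c_k^{(i,j)}\Bigl(\tfrac{1}{\gamma}\Bigr)^{2i-k}\theta^{k},
\]
into each of the five cases of the recursion in Proposition \ref{prop:prop1}, and then read off the recursions for $\{c_k^{(i,j)}\}$ by matching coefficients of the monomials $(1/\gamma)^{2i-k}\theta^{k}$ on both sides. Corollary \ref{cor2} supplies the base of the induction (orders $i=1,2$); for $i\ge 3$ the recursion of Proposition \ref{prop:prop1} expresses $p_j^{(i)}$ purely in terms of $p_\cdot^{(i-1)}$ and $p_{j+1}^{(i)}$, so an inner induction on $j$ (descending from $2i$ down to $1$) closes the argument.

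The skeleton of the bookkeeping is as follows. Take the generic interior case $2<j<2i-1$. The term $\frac{1}{\gamma^2 j}p_{j-2}^{(i-1)}$ contributes $\frac{1}{j}c_k^{(i-1,j-2)}(1/\gamma)^{2i-k}\theta^k$. The term $-\frac{j-1+\gamma\theta}{\gamma^2 j}p_{j-1}^{(i-1)}$ splits into a piece $-\frac{j-1}{j}c_k^{(i-1,j-1)}(1/\gamma)^{2i-k}\theta^k$ of the same shape, plus a piece $-\frac{1}{j}c_{k-1}^{(i-1,j-1)}(1/\gamma)^{2i-k}\theta^k$ after relabelling $k\mapsto k+1$ to absorb the extra $\theta/\gamma$. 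The term $\frac{\theta}{\gamma}p_j^{(i-1)}$ contributes $c_{k-1}^{(i-1,j)}(1/\gamma)^{2i-k}\theta^k$ after the same relabelling. The term $\tfrac12(j+1)p_{j+1}^{(i)}$ contributes $\tfrac{j+1}{2}c_k^{(i,j+1)}(1/\gamma)^{2i-k}\theta^k$ directly. Summing these and equating coefficients yields exactly the interior formula displayed for $0<k<(2i-j)\wedge i$; dropping the two $k-1$ terms when $k=0$ yields the $k=0$ formula. The cases $j\in\{2i,2i-1\}$ (where the $p_{j+1}^{(i)}$ and $p_{j-2}^{(i-1)}$ terms are absent or have restricted $k$-range) and the boundary cases $j\in\{1,2\}$ (where the coefficient $1+\gamma\theta$ replaces $j-1+\gamma\theta$) are handled by the same substitution, simply with fewer summands.

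The only step that needs genuine care is the top index $k=(2i-j)\wedge i$. The issue is that the relabelling $k\mapsto k+1$ in the $\theta/\gamma$ and $\theta\gamma^{-2}$ terms shifts the admissible range by one, and simultaneously the maximum index $(2i-j)\wedge i$ of the sum on the left can match the maximum index of $p_{j+1}^{(i)}$ (which has max $(2i-j-1)\wedge i$), of $p_{j-2}^{(i-1)}$ (max $(2i-j)\wedge(i-1)$), of $p_{j-1}^{(i-1)}$ (max $(2i-j-1)\wedge(i-1)$), and of $p_j^{(i-1)}$ (max $(2i-j-2)\wedge(i-1)$), with different alignments according to whether $j>i$, $j=i$, or $j<i$. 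This is precisely the trichotomy in the statement: when $j>i$ the constraint $K=2i-j$ is active so that the $p_{j+1}^{(i)}$ contribution drops out and the $\theta$-shifted $p_{j-1}^{(i-1)}$, $p_j^{(i-1)}$ contributions arise with one index lower; when $j=i$ both caps coincide and only the $\theta$-shifted $p_j^{(i-1)}$ term survives; when $j<i$ the cap $K=i$ dominates and the $p_{j+1}^{(i)}$ term is retained while the $k$-level contributions from $p_{j-2}^{(i-1)}$ and the non-shifted part of $p_{j-1}^{(i-1)}$ vanish.

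The hardest part of the write-up is therefore purely combinatorial: checking for each of the boundaries $k=0$, $k=(2i-j)\wedge i$, and for each of $j\in\{1,2,2i-1,2i\}$, that the shifted and un-shifted sums align exactly so as to produce the stated formulas, with the correct sub-cases for the top index. No analytic obstacle arises; Proposition \ref{prop:prop1} already encodes the algebra, and the decomposition in Lemma \ref{lemma2} merely separates powers of $\gamma$ from powers of $\theta$. Once the interior recursion is verified, the boundary recursions and the three sub-cases at the cap follow by the same coefficient-matching, and the corollary is established.
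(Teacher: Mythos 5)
Your approach is exactly the paper's: substitute the decomposition of Lemma \ref{lemma2} into the recursion of Proposition \ref{prop:prop1}, split the $\theta$-bearing coefficients, and match powers of $\gamma^{-(2i-k)}\theta^{k}$ on both sides, with the three sub-cases at the top index $k=(2i-j)\wedge i$ arising from which of the five summands hit their own index cap — and your bookkeeping of that trichotomy is correct. One small inaccuracy in a parenthetical remark: for $j\in\{1,2\}$ the coefficient $j-1+\gamma\theta$ is not ``replaced'' by $1+\gamma\theta$ (for $j=2$ it already equals $1+\gamma\theta$, and for $j=1$ it would be $\gamma\theta$); what actually changes at $j\in\{1,2\}$ is that the contributions from $p_{j-2}^{(i-1)}$ and, for $j=1$, also the non-shifted part of $p_{j-1}^{(i-1)}$, are absent because those polynomials do not exist. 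This does not affect the validity of the argument.
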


\section{Supplementary Proof of Proposition \ref{firorderOU}}
\begin{lem}\label{firorderOUlemma}
Let $h(x)\eta(x,t)$ be given in the proof of proposition \ref{firorderOU} and $\tau$ be the downward FPT of OU process crossing $0$. For a constant $K>1$, we have
\[
\mathbb{E}_x\left[\int_0^{\tau\wedge t}\bold{1}_{\set{X_u> K}}|h(X_u)\eta(X_u,t-u)|du\right]\le C_0+C_1\sqrt{t}+C_2 t,
\]
where $C_0,\ C_1$ and $C_2$ are positive constants.
\end{lem}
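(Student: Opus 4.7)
The plan is to reduce the bound to a Gaussian moment computation using the explicit marginal law of the OU process, after first making $|h(x)\eta(x,s)|$ pointwise explicit.

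First I make the integrand explicit. Using Lemma \ref{lemma410} with $N=1$, the formula \eqref{defLn} immediately gives $l_0(x)=\tfrac{1}{2}x^2-\theta x$ and $l_1(x)=\tfrac{1}{2}x$. Substituting the closed forms $D_0(z)=e^{-z^2/4}$, $D_1(z)=ze^{-z^2/4}$, $D_2(z)=(z^2-1)e^{-z^2/4}$ collapses every parabolic cylinder factor and produces, after routine algebra,
\[
\eta(x,s)=\frac{e^{-x^2/(2s)}}{\sqrt{2\pi}}\brck{\frac{1}{2}s^{-1/2}+\frac{x^2-2\theta x}{s^{3/2}}-\frac{x^4/2-\theta x^3}{s^{5/2}}}.
\]
Multiplying by $h(x)=\theta-x$ and using $|\theta-x|\le|x|+|\theta|$ yields a pointwise estimate
\[
|h(x)\eta(x,s)|\le e^{-x^2/(2s)}\sum_{\alpha\in\set{1/2,3/2,5/2}}\sum_{j=0}^{5}a_{\alpha,j}|x|^js^{-\alpha},
\]
with finitely many nonnegative constants $a_{\alpha,j}$ depending only on $\theta$.

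Second, I apply Fubini and drop the stopping. Since the integrand is nonnegative and $\mathbf{1}_{\set{u<\tau\wedge t}}\le\mathbf{1}_{\set{u<t}}$, the left-hand side is bounded by $\int_0^t\mathbb{E}_x\brcksqr{\mathbf{1}_{\set{X_u>K}}|h(X_u)\eta(X_u,t-u)|}du$. The unstopped OU process has marginal law $X_u\sim\mathcal{N}(\mu_u,\sigma_u^2)$ with $\mu_u=xe^{-\epsilon u}+\theta(1-e^{-\epsilon u})$ bounded in absolute value by $|x|+|\theta|$ and $\sigma_u^2=(1-e^{-2\epsilon u})/(2\epsilon)\le 1/(2\epsilon)$. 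Completing the square in the Gaussian integral yields, for each nonneg integer $j$,
\[
\mathbb{E}_x\brcksqr{X_u^j e^{-X_u^2/(2s)}}=\sqrt{\frac{s}{s+\sigma_u^2}}\,e^{-\mu_u^2/(2(s+\sigma_u^2))}\,\mathbb{E}\brcksqr{Y_u^j},
\]
where $Y_u\sim\mathcal{N}\brck{\mu_u s/(s+\sigma_u^2),\ s\sigma_u^2/(s+\sigma_u^2)}$. The moments of $Y_u$ expand by the binomial theorem in terms of $\tilde\mu_u^{j-k}\tilde\sigma_u^k$, each of which is uniformly bounded in $u\in[0,t]$ and $s>0$ (up to a polynomial in $s$).

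Third, I integrate in $u$. The key observation is that for small $s=t-u$ the Gaussian-smoothing prefactor $\sqrt{s/(s+\sigma_u^2)}$ combined with the $O(s^{k/2})$-behaviour of $\tilde\sigma_u^k$ and the bounded exponential $e^{-\mu_u^2/(2(s+\sigma_u^2))}$ provides enough decay to dominate the $s^{-\alpha}$ singularities. A term-by-term accounting shows that each summand produces an integrand bounded uniformly by $c_0+c_1(t-u)^{-1/2}$. Since $\int_0^t\brck{c_0+c_1(t-u)^{-1/2}}du=c_0t+2c_1\sqrt{t}$, together with a bounded boundary contribution absorbed into $C_0$, this produces the claimed form $C_0+C_1\sqrt{t}+C_2 t$.

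The main technical obstacle lies in Step 3: tracking the precise cancellation by which the apparent $s^{-5/2}$ singularity coming from the $x^4$ piece of $\eta$ is offset by the $O(s^{5/2})$ vanishing of $\mathbb{E}_x\brcksqr{X_u^4 e^{-X_u^2/(2s)}}$ as $s\downarrow 0$. The delicate subcase is $u$ near $0$, where $\sigma_u^2\downarrow 0$ and the decay has to come from the exponential factor $e^{-\mu_u^2/(2(s+\sigma_u^2))}\sim e^{-x^2/(2s)}$ rather than from $\tilde\sigma_u$; this subcase must be treated separately from the regime where $\sigma_u^2$ is bounded below.
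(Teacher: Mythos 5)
Your route — Fubini, dropping the stopping, and using the explicit Gaussian marginal of the OU process with a complete-the-square identity — is genuinely different from the paper's. The paper never invokes the transition law of $X_u$; it works entirely with the pointwise inequality $|x^r e^{-x^2/(2s)}|\le r^{r/2}s^{r/2}$ and, crucially, the indicator $\mathbf{1}_{\{X_u>K\}}$: once $\sqrt{r(t-u)}\le K<x$, the map $x\mapsto x^r e^{-x^2/(2s)}$ is past its maximum, so the integrand is dominated by $K^r e^{-K^2/(2s)}/s^{1+\alpha}$, which is (a constant times) an inverse-Gamma density in $s=t-u$ and integrates to a constant; for the complementary range $t-u>K^2/r$ the naive $s^{-1}$ bound is already $\le r/K^2$. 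That split, driven by $x>K$, is the entire mechanism of the paper's proof, and your argument discards the indicator immediately, so it cannot use it.

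The concrete gap is in Step 3, which you yourself flag as the obstacle but do not close. The asserted uniform pointwise bound $c_0+c_1(t-u)^{-1/2}$ is not correct for all summands. Take the $\tilde\mu_u\tilde\sigma_u^2$ piece of $\mathbb{E}[Y_u^3]$ against the $s^{-5/2}$ weight (here $s=t-u$): inserting $\tilde\mu_u=\mu_u s/(s+\sigma_u^2)$ and $\tilde\sigma_u^2=s\sigma_u^2/(s+\sigma_u^2)$ and absorbing $|\mu_u|e^{-\mu_u^2/(2(s+\sigma_u^2))}\lesssim(s+\sigma_u^2)^{1/2}$ leaves a factor of order $\sigma_u^2/(s+\sigma_u^2)^2$, whose pointwise bound is $1/(4s)$, not $c_0+c_1 s^{-1/2}$. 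The $u$-integral of this term is in fact controlled, but only because $\sigma_u^2\sim u$ vanishes as $u\downarrow 0$ and because the exponential factor $e^{-\mu_u^2/(2(s+\sigma_u^2))}$ kills the residual $t^{-1/2}$ blow-up when $t$ is small and $x$ is bounded away from $0$ — none of which is tracked in the sketch. You also mislocate the delicate regime: the singularity of $\eta(\cdot,s)$ is at $s=0$, i.e.\ $u$ near $t$, and the genuinely awkward case is small $t$ (where $\sigma_u^2$ is small uniformly over $[0,t]$ and the starting point $x$ may be close to the boundary), not $u$ near $0$ with $t$ fixed. Salvaging your approach therefore requires a substantially more careful accounting of the joint $(s,\sigma_u^2,\mu_u)$-dependence, and very likely the reinstatement of the $\mathbf{1}_{\{X_u>K\}}$ indicator, at which point the paper's elementary turning-point argument is both shorter and cleaner.
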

\begin{proof}
According to \eqref{enq:etaforOU}, \eqref{eqn:D0}, \eqref{eqn:D1} and \cite[12.7.2]{lozier2003nist}, we can write
\begin{equation}\label{mxfunctiondef}
h(x)\eta(x,t-u)=\frac{\theta - x}{\sqrt{2\pi}}e^{-\frac{x^2}{2(t-u)}}\left(\frac{x\partial_x l_0}{(t-u)^{\frac{3}{2}}}-\frac{l_0}{(t-u)^{\frac{3}{2}}}\left(\frac{x^2}{t-u}-1\right)+\frac{\partial_x l_1}{(t-u)^{\frac{1}{2}}}-\frac{xl_1}{(t-u)^{\frac{3}{2}}}\right).
\end{equation}
For $r\in \mathbb{Z}^+$ and $x\ge 0$, we find
\begin{equation}\label{eqn:boundrex}
\left|e^{-\frac{x^2}{2(t-u)}}x^r\right|\le r^{\frac{r}{2}}\cdot (t-u)^{\frac{r}{2}},
\end{equation}
and $x^*=\sqrt{r(t-u)}$ is the turning point of $e^{-\frac{x^2}{2(t-u)}}x^r$. Based on \eqref{mxfunctiondef}, \eqref{eqn:boundrex} and \eqref{defLn}, for $(x,t)\in \mathcal{D}\times [0,+\infty)$ and $u\in [0,t]$, we get 
\begin{equation}\label{inequalitysubstut}
\left|h(x)\eta(x,t-u)\bold{1}_{\set{x>K}}\right|\le C^{'}_0+C^{'}_1\frac{1}{\sqrt{t-u}}+\left(C^{'}_2\frac{x}{(t-u)^{\frac{3}{2}}}+C^{'}_3\frac{x^3}{(t-u)^{\frac{5}{2}}}\right)e^{-\frac{x^2}{2(t-u)}}\bold{1}_{\set{x>K}},
\end{equation}
where $\set{C^{'}_i:i=0,1,...,3}$ are positive constants. Note that, substituting \eqref{eqn:boundrex} into last two terms in \eqref{inequalitysubstut} gives $\frac{1}{t-u}$, which diverges in the following integral. Therefore, extra care is required.

For $\alpha>0$, we consider a more general function:
\begin{equation}\label{implicitmdef}
m^{r,\alpha}(x,u,t):=\frac{x^r}{(t-u)^{1+\alpha}}e^{-\frac{x^2}{2(t-u)}}.
\end{equation}
First note that, when $t\le\frac{K^2}{r}$, for all $u\in[0,t]$, we have $\sqrt{r(t-u)}\le K<x$. By considering the monotonicity of $m^{r,\alpha}(x,u,t)$ after the turning point, we find
\begin{align}
m^{r,\alpha}(x,u,t)\bold{1}_{\set{x>K}}&\le
\frac{K^r}{(t-u)^{1+\alpha}}e^{-\frac{K^2}{2(t-u)}}\bold{1}_{\set{x>K}}\label{split1fortandK1}\\
&=2^\alpha\Gamma(\alpha)K^{r-2\alpha}IG(t-u,\alpha,\frac{K^2}{2})\bold{1}_{\set{x>K}}\label{split1fortandK2},
\end{align}
where $\Gamma(\cdot)$ is the Gamma function, and $IG(t,\alpha,\frac{K^2}{2})$ is the density function of inverse Gamma distribution with shape parameter $\alpha$ and scale parameter $\frac{K^2}{2}$. On the other hand, when $t>\frac{K^2}{r}$, we consider maximums from two separate intervals. If $u\in [t-\frac{K^2}{r}, t]$, by noticing $\sqrt{r(t-u)}<K<x$, we find \eqref{split1fortandK2} is still valid. If, otherwise, $u\in [0,t-\frac{K^2}{r})$, again, using \eqref{eqn:boundrex} yields
\begin{equation}
m^{r,\alpha}(x,u,t)\bold{1}_{\set{x>K}}\le \frac{r^{\frac{r}{2}}}{(t-u)^{1+\alpha-\frac{r}{2}}}\bold{1}_{\set{x>K}}\label{split1fortandK3}.
\end{equation}
Further, if $1+\alpha-\frac{r}{2}>0$, then for $u\in [0,t-\frac{K^2}{r})$,
\begin{equation}
m^{r,\alpha}(x,u,t)\bold{1}_{\set{x>K}}\le \frac{r^{1+\alpha}}{K^{2+2\alpha-r}}\bold{1}_{\set{x>K}}\label{split1fortandK4}.
\end{equation}
Follow \eqref{split1fortandK2} and \eqref{split1fortandK4}, by taking integral of \eqref{implicitmdef} and considering the conditional expectation, we get
\begin{align*}
\mathbb{E}_x\left[\int_0^tm^{r,\alpha}(X_u,u,t)\bold{1}_{\set{X_u>K}}du\right]&=\left(\bold{1}_{\set{t\le \frac{K^2}{r}}}+\bold{1}_{\set{t> \frac{K^2}{r}}}\right)\cdot \int_0^t\mathbb{E}_x\left[m^{r,\alpha}(X_u,u,t)\bold{1}_{\set{X_u>K}}\right]du\\
&\le 2^\alpha\Gamma(\alpha)K^{r-2\alpha}\int_0^tIG(t-u,\alpha,\frac{K^2}{2})\mathbb{P}_x(X_u>K)du\cdot \bold{1}_{\set{t\le \frac{K^2}{r}}}\\
&\ \ \ \ +2^\alpha\Gamma(\alpha)K^{r-2\alpha}\int_{t-\frac{K^2}{r}}^tIG(t-u,\alpha,\frac{K^2}{2})\mathbb{P}_x(X_u>K)du\cdot \bold{1}_{\set{t> \frac{K^2}{r}}}\\
&\ \ \ \ +\int_0^{t-\frac{K^2}{r}}\frac{r^{1+\alpha}}{K^{2+2\alpha-r}} \mathbb{P}_x(X_u>K)du\cdot \bold{1}_{\set{t> \frac{K^2}{r}}}\\
&\le \left(2^\alpha\Gamma(\alpha)K^{r-2\alpha}-\frac{r^\alpha}{K^{2\alpha -r}}\right)+\frac{r^{1+\alpha}}{K^{2+2\alpha-r}}\cdot t.
\end{align*}
The last inequality is true as $IG(t,\cdot,\cdot)$ is a density function and $\mathbb{P}_x(X_u>K)\le 1$. Now, combine the inequality above and \eqref{inequalitysubstut}, for the original problem we get
\begin{align*}
\mathbb{E}_x\left[\int_0^{\tau\wedge t}\bold{1}_{\set{X_u> K}}|h(X_u)\eta(X_u,t-u)|du\right]&\le \mathbb{E}_x\left[\int_0^{t}\bold{1}_{\set{X_u> K}}|h(X_u)\eta(X_u,t-u)|du\right]\\
&\le C_0+C_1\sqrt{t}+C_2 t.
\end{align*}
This concludes our proof.
\end{proof}
\clearpage

\section{Further Numerical Results}
\subsection{Left Tail Zoom-in for OU Process}
 \begin{figure}[h]
 \centering
 \begin{minipage}[c]{0.48\textwidth}
 \centering
        \includegraphics[width=1.\textwidth, height = .9\textwidth]{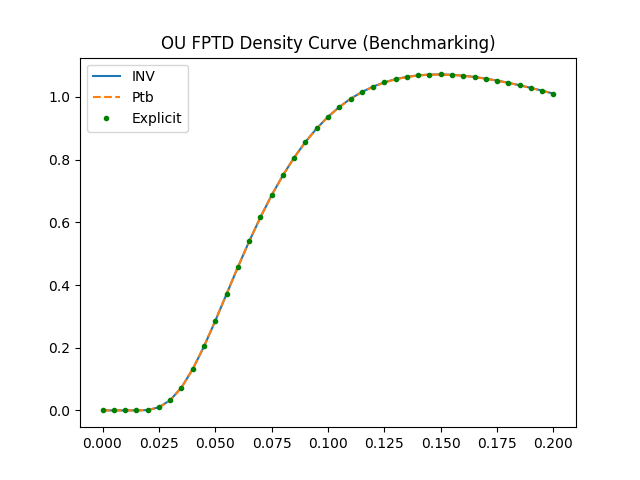}
        \caption{OU left tail density for $l= \theta$}
        \label{fig5}
 \end{minipage}
\hspace{1ex}
\begin{minipage}[c]{0.49\textwidth}
\centering
        \includegraphics[width=1.\textwidth, height = 0.9\textwidth]{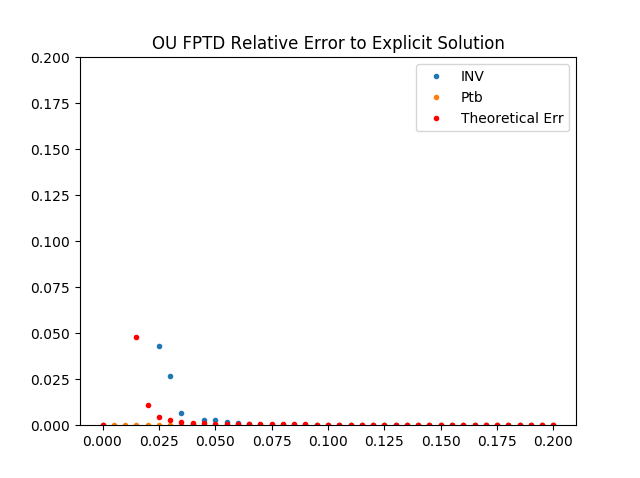}
        \caption{OU left tail error for $l= \theta$}
        \label{fig6}
 \end{minipage}
 \end{figure}

 \begin{figure}[h]
 \centering
 \begin{minipage}[c]{0.48\textwidth}
 \centering
        \includegraphics[width=1.\textwidth, height = .9\textwidth]{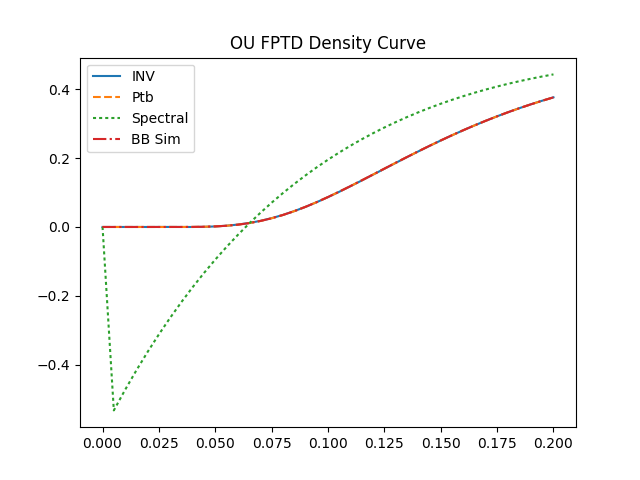}
        \caption{OU left tail density for $l\neq \theta$}
        \label{fig7}
 \end{minipage}
\hspace{1ex}
\begin{minipage}[c]{0.49\textwidth}
\centering
        \includegraphics[width=1.\textwidth, height = 0.9\textwidth]{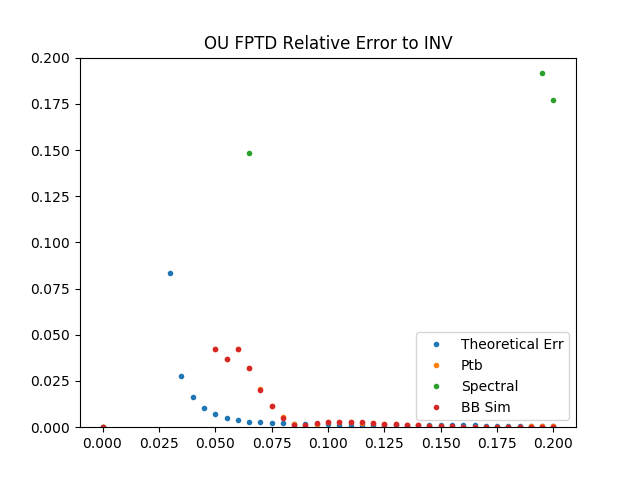}
        \caption{OU left tail error for $l\neq \theta$}
        \label{fig8}
 \end{minipage}
 \end{figure}
\clearpage

\subsection{Numerical Results for Bessel Process}
Error results in this section and section \ref{sectionnext} show theoretical errors are smaller than realised errors. This is due to the fact that Talbot inverse itself has numerical errors. From the results we may conclude the perturbation is more accurate than the Talbot inverse for $BES(1.5)$ and exponential-Shiryaev processes.

 \begin{figure}[h]
 \centering
 \begin{minipage}[c]{0.48\textwidth}
 \centering
        \includegraphics[width=1.\textwidth, height = .8\textwidth]{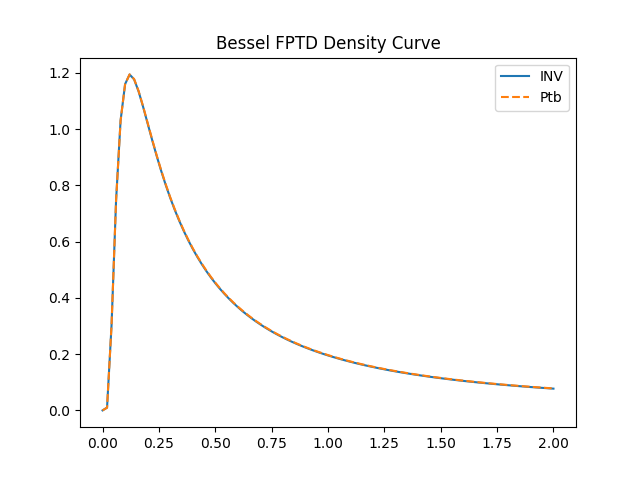}
        \caption{Bessel density with $\epsilon = 0.1,\ x=0.7,\ l=0.1$}
        \label{fig9}
 \end{minipage}
\hspace{1ex}
\begin{minipage}[c]{0.49\textwidth}
\centering
        \includegraphics[width=1.\textwidth, height = 0.8\textwidth]{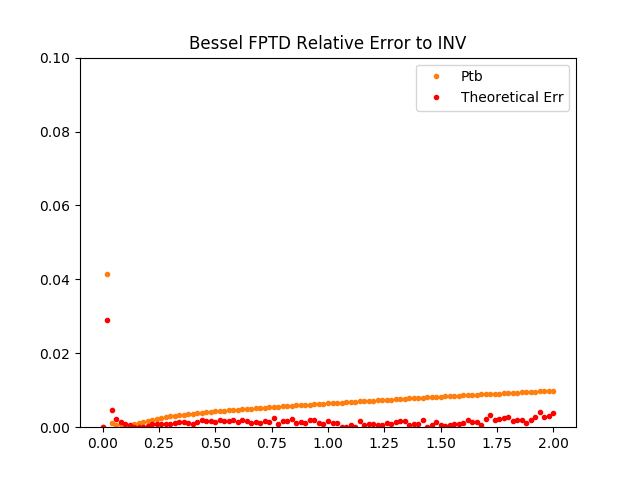}
        \caption{Bessel density error with $\epsilon = 0.1,\ x=0.7,\ l=0.1$}
        \label{fig10}
 \end{minipage}
 \end{figure}

 \begin{figure}[h]
 \centering
 \begin{minipage}[c]{0.48\textwidth}
 \centering
        \includegraphics[width=1.\textwidth, height = .9\textwidth]{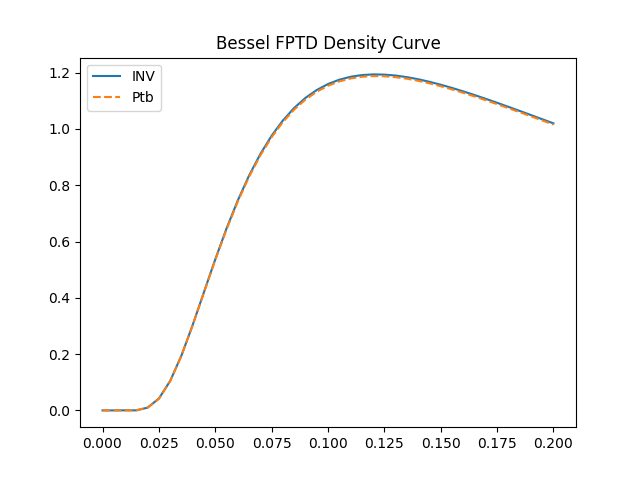}
        \caption{Bessel left tail density}
        \label{fig11}
 \end{minipage}
\hspace{1ex}
\begin{minipage}[c]{0.49\textwidth}
\centering
        \includegraphics[width=1.\textwidth, height = 0.9\textwidth]{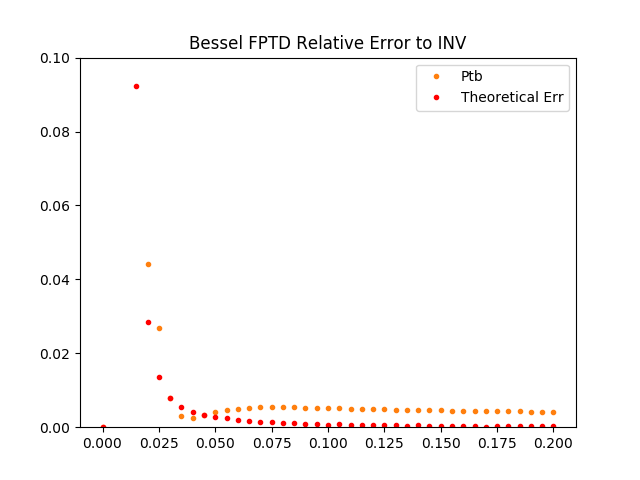}
        \caption{Bessel left tail relative error}
        \label{fig12}
 \end{minipage}
 \end{figure}
\clearpage

\subsection{Numerical Results for Exponential-Shiryaev Process}\label{sectionnext}

 \begin{figure}[h]
 \centering
 \begin{minipage}[c]{0.48\textwidth}
 \centering
        \includegraphics[width=1.\textwidth, height = .8\textwidth]{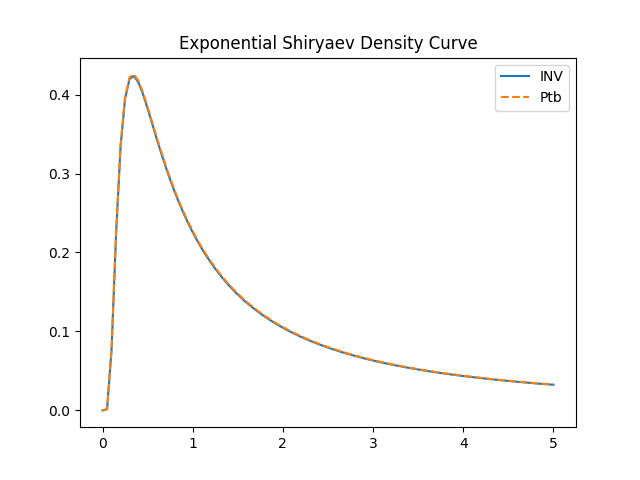}
        \caption{Exponential-Shiryaev density with $\epsilon = 0.1,\ \alpha = 0.5,\ c = 0.2,\ \sigma = 0.5,\ x=0.7,\ l=0.2$}
        \label{fig13}
 \end{minipage}
\hspace{1ex}
\begin{minipage}[c]{0.48\textwidth}
\centering
        \includegraphics[width=1.\textwidth, height = 0.8\textwidth]{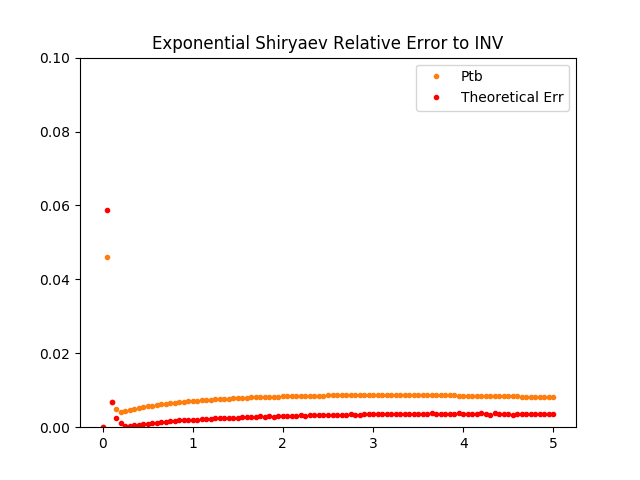}
        \caption{Density error with $\epsilon = 0.1,\ \alpha = 0.5,\ c = 0.2,\ \sigma = 0.5,\ x=0.7,\ l=0.2$}
        \label{fig14}
 \end{minipage}
 \end{figure}

 \begin{figure}[h]
 \centering
 \begin{minipage}[c]{0.49\textwidth}
 \centering
        \includegraphics[width=1.\textwidth, height = .9\textwidth]{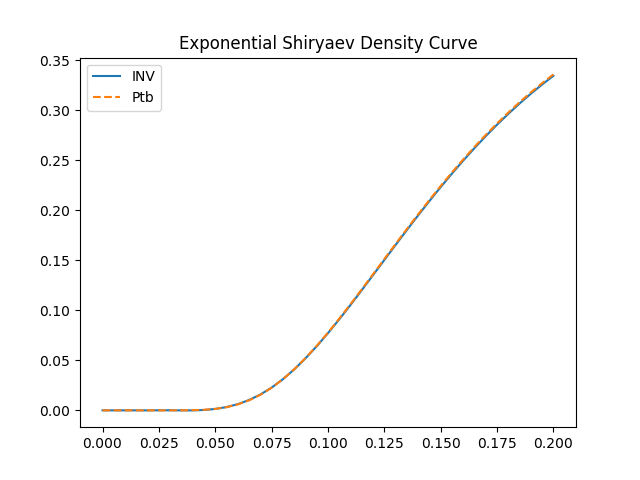}
        \caption{Exponential-Shiryaev left tail density}
        \label{fig15}
 \end{minipage}
\hspace{1ex}
\begin{minipage}[c]{0.48\textwidth}
\centering
        \includegraphics[width=1.\textwidth, height = 0.9\textwidth]{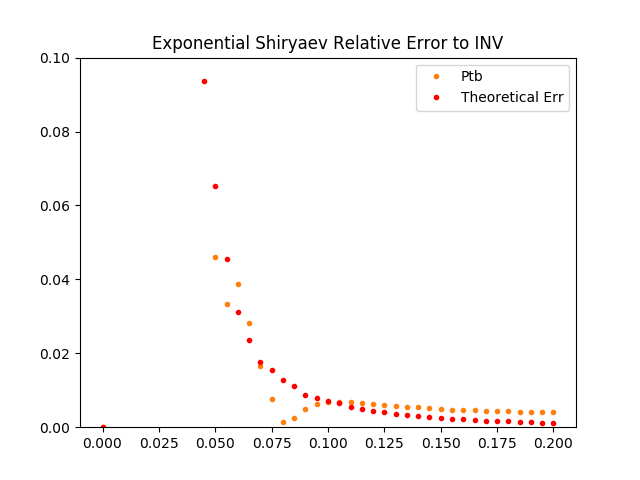}
        \caption{Left tail density error}
        \label{fig16}
 \end{minipage}
 \end{figure}

\clearpage

\bibliographystyle{plain}
\bibliography{reff}

\begin{thebibliography}{10}

\bibitem{abate2006unified}
Joseph Abate and Ward Whitt.
\newblock A unified framework for numerically inverting {L}aplace transforms.
\newblock {\em INFORMS Journal on Computing}, 18(4):408--421, 2006.

\bibitem{abramowitz1964handbook}
Milton Abramowitz and Irene~A Stegun.
\newblock {\em Handbook of mathematical functions: with formulas, graphs, and
  mathematical tables}, volume~55.
\newblock Courier Corporation, 1964.

\bibitem{alili2005representations}
Larbi Alili, P~Patie, and Jesper~Lund Pedersen.
\newblock Representations of the first hitting time density of an
  {O}rnstein-{U}hlenbeck process.
\newblock {\em Stochastic Models}, 21(4):967--980, 2005.

\bibitem{arbib1965hitting}
Michael~A Arbib.
\newblock Hitting and martingale characterizations of one-dimensional
  diffusions.
\newblock {\em Zeitschrift f{\"u}r Wahrscheinlichkeitstheorie und Verwandte
  Gebiete}, 4(3):232--247, 1965.

\bibitem{bachelier1900theorie}
Louis Bachelier.
\newblock {\em Th{\'e}orie de la sp{\'e}culation}.
\newblock Gauthier-Villars, 1900.

\bibitem{baldi1999pricing}
Paolo Baldi, Lucia Caramellino, and Maria~Gabriella Iovino.
\newblock Pricing general barrier options: a numerical approach using sharp
  large deviations.
\newblock {\em Mathematical Finance}, 9(4):293--321, 1999.

\bibitem{bateman1954tables}
Harry Bateman.
\newblock {\em Tables of integral transforms [volumes I \& II]}, volume~1.
\newblock McGraw-Hill Book Company, 1954.

\bibitem{blake1973level}
Ian Blake and William Lindsey.
\newblock Level-crossing problems for random processes.
\newblock {\em IEEE transactions on information theory}, 19(3):295--315, 1973.

\bibitem{borodin2012handbook}
Andrei~N Borodin and Paavo Salminen.
\newblock {\em Handbook of {B}rownian motion-facts and formulae}.
\newblock Birkh{\"a}user, 2012.

\bibitem{carr2006jump}
Peter Carr and Vadim Linetsky.
\newblock A jump to default extended {CEV} model: an application of {B}essel
  processes.
\newblock {\em Finance and Stochastics}, 10(3):303--330, 2006.

\bibitem{chen1992pricing}
Ren-Raw Chen and Louis Scott.
\newblock Pricing interest rate options in a two-factor
  {C}ox--{I}ngersoll--{R}oss model of the term structure.
\newblock {\em The review of financial studies}, 5(4):613--636, 1992.

\bibitem{cox2005theory}
John~C Cox, Jonathan~E Ingersoll~Jr, and Stephen~A Ross.
\newblock A theory of the term structure of interest rates.
\newblock In {\em Theory of Valuation}, pages 129--164. World Scientific, 2005.

\bibitem{dassios2018economic}
Angelos Dassios and Luting Li.
\newblock An economic bubble model and its first passage time.
\newblock {\em arXiv preprint arXiv:1803.08160}, 2018.

\bibitem{dassios2018recursive}
Angelos Dassios and Jia~Wei Lim.
\newblock Recursive formula for the double-barrier {P}arisian stopping time.
\newblock {\em Journal of Applied Probability}, 55(1):282--301, 2018.

\bibitem{dassios2018azema}
Angelos Dassios, Yan Qu, and Jia~Wei Lim.
\newblock Az{\'e}ma martingales and {P}arisian excursions of {B}essel and {CIR}
  processes.
\newblock {\em Working paper}, 2018.

\bibitem{dassios2010perturbed}
Angelos Dassios and Shanle Wu.
\newblock Perturbed {B}rownian motion and its application to {P}arisian option
  pricing.
\newblock {\em Finance and Stochastics}, 14(3):473--494, 2010.

\bibitem{dassios2016joint}
Angelos Dassios and You~You Zhang.
\newblock The joint distribution of {P}arisian and hitting times of {B}rownian
  motion with application to {P}arisian option pricing.
\newblock {\em Finance and Stochastics}, 20(3):773--804, 2016.

\bibitem{doob2012classical}
Joseph~L Doob.
\newblock {\em Classical potential theory and its probabilistic counterpart:
  Advanced problems}, volume 262.
\newblock Springer Science \& Business Media, 2012.

\bibitem{fouque2011multiscale}
Jean-Pierre Fouque, George Papanicolaou, Ronnie Sircar, and Knut S{\o}lna.
\newblock {\em Multiscale stochastic volatility for equity, interest rate, and
  credit derivatives}.
\newblock Cambridge University Press, 2011.

\bibitem{geller1969table}
Murray Geller and Edward~W Ng.
\newblock A table of integrals of the exponential integral.
\newblock {\em Journal of Research of the National Bureau of Standards},
  73(3):191--210, 1969.

\bibitem{going2003clarification}
Anja G{\"o}ing-Jaeschke and Marc Yor.
\newblock A clarification note about hitting times densities for
  {O}rnstein-{U}hlenbeck processes.
\newblock {\em Finance and Stochastics}, 7(3):413--415, 2003.

\bibitem{heston1993closed}
Steven~L Heston.
\newblock A closed-form solution for options with stochastic volatility with
  applications to bond and currency options.
\newblock {\em The review of financial studies}, 6(2):327--343, 1993.

\bibitem{holmes2012introduction}
Mark~H Holmes.
\newblock {\em Introduction to perturbation methods}, volume~20.
\newblock Springer Science \& Business Media, 2012.

\bibitem{ichiba2011efficient}
Tomoyuki Ichiba and Constantinos Kardaras.
\newblock Efficient estimation of one-dimensional diffusion first passage time
  densities via {M}onte {C}arlo simulation.
\newblock {\em Journal of Applied Probability}, 48(3):699--712, 2011.

\bibitem{ikeda2014stochastic}
Nobuyuki Ikeda and Shinzo Watanabe.
\newblock {\em Stochastic differential equations and diffusion processes},
  volume~24.
\newblock Elsevier, 2014.

\bibitem{ito2012diffusion}
Kiyosi It{\^o}, P~Henry~Jr, et~al.
\newblock {\em Diffusion processes and their sample paths}.
\newblock Springer Science \& Business Media, 2012.

\bibitem{jarrow1995pricing}
Robert~A Jarrow and Stuart~M Turnbull.
\newblock Pricing derivatives on financial securities subject to credit risk.
\newblock {\em The journal of finance}, 50(1):53--85, 1995.

\bibitem{kent1978some}
John Kent.
\newblock Some probabilistic properties of {B}essel functions.
\newblock {\em The Annals of Probability}, pages 760--770, 1978.

\bibitem{kent1980eigenvalue}
John~T Kent.
\newblock Eigenvalue expansions for diffusion hitting times.
\newblock {\em Probability Theory and Related Fields}, 52(3):309--319, 1980.

\bibitem{kent1982spectral}
John~T Kent.
\newblock The spectral decomposition of a diffusion hitting time.
\newblock {\em The Annals of Probability}, pages 207--219, 1982.

\bibitem{lansky1995comparison}
Petr L{\'a}nsk{\`y}, Laura Sacerdote, and Francesca Tomassetti.
\newblock On the comparison of {F}eller and {O}rnstein-{U}hlenbeck models for
  neural activity.
\newblock {\em Biological cybernetics}, 73(5):457--465, 1995.

\bibitem{linetsky2004computing}
Vadim Linetsky.
\newblock Computing hitting time densities for {CIR} and {OU} diffusions:
  Applications to mean-reverting models.
\newblock {\em Journal of Computational Finance}, 7:1--22, 2004.

\bibitem{lozier2003nist}
Daniel~W Lozier.
\newblock {NIST} digital library of mathematical functions.
\newblock {\em Annals of Mathematics and Artificial Intelligence},
  38(1-3):105--119, 2003.

\bibitem{mckean1960bessel}
HP~Jr McKean et~al.
\newblock The {B}essel motion and a singular integral equation.
\newblock {\em Memoirs of the College of Science, University of Kyoto. Series
  A: Mathematics}, 33(2):317--322, 1960.

\bibitem{novikov2003time}
A~Novikov, V~Frishling, and N~Kordzakhia.
\newblock Time-dependent barrier options and boundary crossing probabilities.
\newblock {\em Georgian Mathematical Journal}, 10(2):325--334, 2003.

\bibitem{novikov1981martingale}
AA~Novikov.
\newblock A martingale approach to first passage problems and a new condition
  for {W}ald's identity.
\newblock In {\em Stochastic Differential Systems}, pages 146--156. Springer,
  1981.

\bibitem{peskir2006optimal}
Goran Peskir and Albert Shiryaev.
\newblock {\em Optimal stopping and free-boundary problems}.
\newblock Springer, 2006.

\bibitem{redner2001guide}
Sidney Redner.
\newblock {\em A guide to first-passage processes}.
\newblock Cambridge University Press, 2001.

\bibitem{revuz2013continuous}
Daniel Revuz and Marc Yor.
\newblock {\em Continuous martingales and {B}rownian motion}, volume 293.
\newblock Springer Science \& Business Media, 2013.

\bibitem{ricciardi1999outline}
LM~Ricciardi, AD~Crescenzo, V~Giorno, and AG~Nobile.
\newblock An outline of theoretical and algorithmic approaches to first passage
  time problems with applications to biological modeling.
\newblock {\em Mathematica Japonica}, 50:247--322, 1999.

\bibitem{schrodinger1915theorie}
E~Schr{\"o}dinger.
\newblock Zur theorie der fall-und steigversuche an teilchen mit brownscher
  bewegung.
\newblock {\em Physikalische Zeitschrift}, 16:289--295, 1915.

\bibitem{schrodinger1926quantisierung}
Erwin Schr{\"o}dinger.
\newblock Quantisierung als eigenwertproblem.
\newblock {\em Annalen der physik}, 385(13):437--490, 1926.

\bibitem{schwartz1997stochastic}
Eduardo~S Schwartz.
\newblock The stochastic behavior of commodity prices: Implications for
  valuation and hedging.
\newblock {\em The journal of finance}, 52(3):923--973, 1997.

\bibitem{siegert1951first}
Arnold~JF Siegert.
\newblock On the first passage time probability problem.
\newblock {\em Physical Review}, 81(4):617, 1951.

\bibitem{uhlenbeck1930theory}
George~E Uhlenbeck and Leonard~S Ornstein.
\newblock On the theory of the {B}rownian motion.
\newblock {\em Physical review}, 36(5):823, 1930.

\bibitem{vasicek1977equilibrium}
Oldrich Vasicek.
\newblock An equilibrium characterization of the term structure.
\newblock {\em Journal of financial economics}, 5(2):177--188, 1977.

\bibitem{wan1982neuronal}
Frederic~YM Wan and Henry~C Tuckwell.
\newblock Neuronal firing and input variability.
\newblock {\em J. theor. Neurobiol}, 1(2):197--218, 1982.

\bibitem{wang1945theory}
Ming~Chen Wang and George~Eugene Uhlenbeck.
\newblock On the theory of the {B}rownian motion {II}.
\newblock {\em Reviews of modern physics}, 17(2-3):323, 1945.

\bibitem{watson1995treatise}
George~Neville Watson.
\newblock {\em A treatise on the theory of Bessel functions}.
\newblock Cambridge university press, 1995.

\end{thebibliography}

\end{document}